\def\R{\mathbb{R}}
\newtheorem{theorem}{Theorem}[]
\newtheorem{lemma} {Lemma}
\newtheorem{proposition} {Proposition}
\newtheorem{corollary} {Corollary}
\theoremstyle{definition}
\newtheorem{definition}{Definition}
\newtheorem{rem}{Remark}
\begin{document}

\title{Sub-Finsler geometry and nonholonomic mechanics}
\author{Layth M. Alabdulsada}

\address{Dep. of Math, College of Science, University of Al Qadisiyah, Al-Qadisiyah 58001, Iraq}
\email{layth.muhsin@qu.edu.iq}

\subjclass[2020]{53C05, 53C60, 70F25, 53C17}
\keywords{Sub-Finsler geometry, Sub-Hamiltonian vector field, Sub-Hamiltonian equations,  Non-linear connection, Nonholonomic Free Particle, Sub-Laplacian}

\bibliographystyle{alpha}

\begin{abstract}
We discuss a variational approach to the length functional and its relation to sub-Hamiltonian equations on sub-Finsler manifolds.
Then, we introduce the notion of the nonholonomic sub-Finslerian structure and prove that the distributions are geodesically invariant concerning the Barthel non-linear connection.
We provide necessary and sufficient conditions for the existence of the curves that are abnormal extremals; likewise, we provide necessary and sufficient conditions for normal extremals to be the motion of a free nonholonomic mechanical system, and vice versa.
 Moreover, we show that a coordinate-free approach for a free particle is a comparison between the solutions of the nonholonomic mechanical problem and the solutions of the Vakonomic dynamical problem for the nonholonomic sub-Finslerian structure.
In addition, we provide an example of the nonholonomic sub-Finslerian structure. Finally, we show that the sub-Laplacian measures the curvature
of the nonholonomic sub-Finslerian structure.
\end{abstract}
\maketitle

\section{Introduction}
Sub-Finsler geometry and nonholonomic mechanics have attracted much attention recently; they are rich subjects with many applications.

Sub-Finsler geometry is a natural generalization of sub-Riemannian geometry. The sub-Riemannian metric was initially referred to as the Carnot-Carathéodory metric. J. Mitchell, \cite{MI85}, investigated the Carnot-Carathéodory distance between two points by considering a smooth Riemannian $n$-manifold $(M, g)$ equipped with a $k$-rank distribution $\mathcal{D}$ of the tangent bundle $TM$. A decade later,  M. Gromov \cite{Gr96} provided a comprehensive study of the above concepts. V. N. Berestovskii \cite{BE88} identified the Carnot-Caratheodory Finsler metric version as the Finsler counterpart of this metric, now commonly known as the sub-Finsler metric. In this study, our definition of the sub-Finsler metric closely aligns with the definition presented in previous works \cite{L.K, JCG}. The motivation behind studying sub-Finsler geometry lies in its pervasive presence within various branches of pure mathematics, particularly in differential geometry and applied fields like geometric mechanics, control theory, and robotics. We refer the readers to \cite{ABB, L.K1, BBLS17, LoMa00}.

Nonholonomic mechanics is currently a very active area of the so-called geometric mechanics \cite{KF00}. Constraints on mechanical systems are typically classified into two categories: integrable and nonintegrable constraints.
 {\em Nonholonomic mechanics}: constraints that are not holonomic; these might be constraints that are expressed in terms of the velocity of the coordinates that cannot be derived from the constraints of the coordinates (thereby nonintegrable) or the constraints that are not given as an equation at all \cite{Lewis}.
Nonholonomic control systems exhibit unique characteristics, allowing control of underactuated systems due to constraint nonintegrability. These problems arise in physical contexts like wheel systems, cars, robotics, and manipulations, with more insights found in \cite{BCGM15, KF00}.

In \cite{La01}, B. Langerock considered a general notion of connections over a vector bundle map and applied it to the study of mechanical systems with linear nonholonomic constraints and a Lagrangian of kinetic energy type.
A. D. Lewis in \cite{Lewis}, investigated various consequences of a natural restriction of a given affine connection to distribution. The basic construction comes from the dynamics of a class of mechanical systems with nonholonomic constraints. In a previous paper in collaboration with L. Kozma \cite{L.K}, constructed a generalized non-linear connection for a sub-Finslerian manifold, called $\mathcal{L}$-connection by the Legendre transformation which characterizes normal extremals of a sub-Finsler structure as geodesics of this connection.  In this paper, \cite{L.K} and \cite{L.K1} play an important role in calculating our main results. These results are divided into two parts: sub-Hamiltonian systems and nonholonomic sub-Finslerian structures on the nonintegrable distributions.

The paper is organized according to the following:
In Section 2, we review some standard facts about sub-Finslerian settings. In Section 3, we define a sub-Finsler metric on $\mathcal{D}$ by using a sub-Hamiltonian function $\eta(x,p)$ and show the correspondence between the solutions of sub-Hamiltonian equations and the solution of a variational problem. Section 4 introduces the notion of nonholonomic sub-Finslerian structures and presents the main results, including conditions for the motion of a free mechanical system under linear nonholonomic constraints to be normal extremal with respect to the linked sub-Finslerian structure. Section 5 provides an example of the nonholonomic sub-Finslerian structure, and Section 6 discusses the curvature of the sub-Finslerian structure. We conclude that if the sub-Laplacian $\Delta_F$ is zero, then the sub-Finslerian structure is flat and locally isometric to a Riemannian manifold, while if $\Delta_F$ is nonzero, the sub-Finslerian structure is curved and the shortest paths between two points on the manifold are not necessarily straight lines.

\section{Preliminaries}
Let $M$ be an $n$-dimensional smooth ($C^{\infty}$) manifold, and let $T_xM$ represent its tangent space at a point $x \in M$. We denote the module of vector fields over $C^{\infty}(M)$ by $\mathfrak{X}(M)$, and the module of $1$-forms by $\mathfrak{X}^*(M)$.

Consider $\mathcal{D}$, a {\it regular distribution} on $M$, defined as a subbundle of the tangent bundle $TM$ with a constant rank of $k$. Locally, in coordinates, this distribution can be expressed as $\mathcal{D} = \textrm{span} \{X_{1}, \ldots, X_{k}\}$, where $X_i(x) \in \mathfrak{X}(M)$ are linearly independent vector fields.

A non-negative function $F: \mathcal{D} \rightarrow \R_+$ is called a {\em sub-Finsler metric} if it satisfies the following conditions:
\begin{itemize}
  \item [1.] \textbf{Smoothness}: $F$ is a smooth function over $\mathcal{D}\setminus{0}$;
  \item [2.] \textbf{Positive Homogeneity}: $F(\lambda v) = |\lambda| F(v)$ for all $\lambda \in \R$ and $v \in \mathcal{D} \setminus {0}$;
  \item [3.] \textbf{Positive Definiteness}: The Hessian matrix of $F^2$ is positive definite at every $v \in \mathcal{D}_x \setminus {0}$.
\end{itemize}

A differential manifold $M$ equipped with a sub-Finsler metric $F$ is recognized as a {\em sub-Finsler manifold}, denoted by $(M, \mathcal{D}, F)$.

An absolutely continuous curve, denoted as $\sigma:[0, 1] \rightarrow M$, is considered {\em horizontal} if its tangent vector field $\dot{\sigma}(t)$ lies within $\mathcal{D}_{\sigma(t)}$ for all $t \in [0, 1]$, whenever it is defined. This condition reflects the nonholonomic constraints imposed on the curve.

The length functional of such a horizontal curve $\sigma$ possesses a derivative for almost all $t \in [0, 1]$, with the components of the derivative, $\dot{\sigma}$, representing measurable curves. The {\em length} of $\sigma$ is usually defined as:
$$\ell(\sigma)=\int_0^1 F(\dot{\sigma}(t))dt.$$

This length structure gives rise to a {\em distance function}, denoted as $d: M \times M \rightarrow \mathbb{R}_+$, defined by:
$$ d(x_0, x_1)=\inf \ell(\sigma), \qquad x_0, x_1 \in M,$$
and the infimum is taken over all horizontal curves connecting $\sigma(0) = x_0$ to $\sigma(1) = x_1$. This distance metric captures the minimal length among all possible horizontal paths between two points on the manifold $M$.

%A piecewise smooth horizontal curve connecting $x_0$ and $x_1$ and achieving the distance $d(x_0, x_1)$ is referred to as a {\em minimizing  geodesic} between $x_0$ and $x_1$.

A  {\em geodesic}, also known as a  {\em minimizing geodesic}, refers to a horizontal curve $\sigma:[0, 1] \rightarrow M$  that realizes the distance between two points, i.e., $\ell(\sigma) = d(\sigma (0), \sigma (1))$.

 Throughout this paper, it is consistently assumed that $\mathcal{D}$ is bracket-generating. A distribution $\mathcal{D}$, is characterized as {\em bracket-generating} if every local frame $X_i$ of $\mathcal{D}$, along with all successive Lie brackets involving these frames, collectively span the entire tangent bundle $TM$.  If $\mathcal{D}$ represents a bracket-generating distribution on a connected manifold $M$, it follows that any two points within $M$ can be joined by a horizontal curve.
This foundational concept was initially established by C. Carathéodory \cite{CA09} and later reaffirmed by W. L. Chow \cite{Chow39} and P. K. Rashevskii \cite{RA38}. However, for a comprehensive explanation of the bracket-generating concept, one can turn to R. Montgomery's book, \cite{Mo02}.

\section{Sub-Hamiltonian associated with sub-Finslerian manifolds}
\subsection{The Legendre transformation and Finsler dual of sub-Finsler metrics}
Let $\mathcal{D}^*$ be a rank-$s$ codistribution on a smooth manifold $M$, assigning to each point $x \in U \subset M$ a linear subspace $\mathcal{D}^*_x \subset T^*_xM$. This codistribution is a smooth subbundle, and spanned locally by $s$ pointwise linearly independent smooth differential 1-forms:
$$\mathcal{D}_x^* = \mathrm{span}\{\alpha_1(x), \ldots, \alpha_s(x)\},\ \text{ with}\ \alpha_i(x) \in \mathfrak{X}^*(M).$$
We define the annihilator of a distribution $\mathcal{D}$ on $M$ as $(\mathcal{D}^{\bot})^0$, a subbundle of $T^*M$ consisting of covectors that vanish on $\mathcal{D}$:
$$(\mathcal{D}^{\bot})^0 = \{\alpha \in T^*M: \alpha(v) = 0 \text{ for all } v \in \mathcal{D}\},$$
such that $\langle v, \alpha \rangle := \alpha(v)$.
Similarly, we define the annihilator of the orthogonal complement of $\mathcal{D}$, denoted by $\mathcal{D}^0$, as the subbundle of $T^*M$ consisting of covectors that vanish on $TM^{\bot}$.

Using these notions, we can define a sub-Finslerian function denoted by $F^* \in \mathcal{D}^* \sim T^*M\setminus\mathcal{D}^0$, where $F^*$ is a positive function. This function shares similar properties with $F$, but is based on $\mathcal{D}^*$ instead of $\mathcal{D}$.

In our previous work \cite{L.K1}, we established the relationship:
\begin{equation}\label{Fpv}
F^*(p)= F(v), \ \mathrm{where}\ p =\mathcal{L}_L(v), \quad\mbox{for every}\quad  p \in \mathcal{D}^*_x \quad\mbox{and}\quad v \in \mathcal{D}_x,
\end{equation}
such that  $\mathcal{L}_L$ is the Legendre transformation of the sub-Lagrangian function $L: \mathcal{D} \subset TM \to \mathbb{R}$, a diffeomorphism between $\mathcal{D}$ and $\mathcal{D}^*$.

In this context, to express $F^*$ in terms of $F$, we consider the Legendre transformation of $F$ with respect to the sub-Lagrangian function $L(v) = \frac{1}{2} F(v,v)$, where $F(v,v)$ is the square of the Finsler norm of $v$. The Legendre transformation $\mathcal{L}_L$ maps $v \in \mathcal{D}$ to $p = \frac{\partial L}{\partial v}(v)$.

Utilizing the definition of the Legendre transformation, we observe that
$$p = \frac{\partial L}{\partial v}(v) = \frac{\partial}{\partial v}\left(\frac{1}{2}F(v,v)\right) = F(v,\cdot),$$
where $F(v,\cdot)$ denotes the differential of $F$ with respect to its first argument evaluated at $v$. Note that $F(v,\cdot)$ is a linear function on $\mathcal{D}_x$.

Given a covector $p \in \mathcal{D}^*$, with $x$ the base point of $\mathcal{D}$, we can express the dual sub-Finsler metric $F^*$ in terms of $F$ as
     $$
     F^*(p)= \sup_{v \in \mathcal{D}_x} \left\{ \frac{\langle p, v \rangle}{F(v)} \right\},
     $$
     where $\langle p, v \rangle$ represents the dual pairing between the covector $p$ and the vector $v$.

\subsection{The Sub-Hamiltonian Function and Sub-Hamilton's Equations for Sub-Finsler Manifolds}
      The sub-Hamiltonian function associated with a sub-Finsler metric $F$ given by
      \begin{equation*}
      \eta := \frac{1}{2}(F^*)^2.
        \end{equation*}
Here, $F^*$ denotes the dual metric to $F$, defined by
\begin{equation}\label{F*}
F^*(p) = \sup_{v \in \mathcal{D}_x, F(v)=1} \langle p,v\rangle,
\end{equation}
where $p$ represents a momentum vector in $\mathcal{D}^*_x$ associated with the point $x$ in the manifold $M$, and  $\langle \cdot,\cdot \rangle$ denotes the inner product induced by a Riemannian metric $g$. The sub-Finslerian metric defined by (\ref{F*}) is known as the Legendre transform of $F$, i.e., satisfying the relationship in \eqref{Fpv}.
It is worth noting that the sub-Hamiltonian function associated with a Finsler metric is not unique, and different choices of Hamiltonians may lead to different dynamics for the associated geodesics.

The sub-Hamiltonian formalism is a method of constructing a sub-Finsler metric on a subbundle $\mathcal{D}$ by defining a sub-Hamiltonian function $\eta(x,p)$ on the subbundle $\mathcal{D}^*$, where $x$ denotes a point in $M$ and $p$ denotes a momentum vector in $\mathcal{D}^*$, as explained in the following remark:
 \begin{rem}
The sub-Finsler vector bundle, introduced in \cite{L.K1} and expanded upon in \cite{LA23}, plays a pivotal role in formulating sub-Hamiltonians in sub-Finsler geometry. Consider the covector subbundle $(\mathcal{D}^*, \tau, M)$ with projection $\tau : \mathcal{D}^* \to M$, forming a rank-$k$ subbundle in the cotangent bundle of $T^*M$. The pullback bundle $\tau^*(\tau) = (\mathcal{D}^* \times \mathcal{D}^*, \mathrm{pr}_1, \mathcal{D}^*)$ is obtained by pulling back $\tau$ through itself and is denoted as the sub-Finsler bundle over $\mathcal{D}^*_x$. This bundle allows the introduction of $k$ orthonormal covector fields $X_1, X_2, \dots, X_k$ with respect to the induced Riemannian metric $g$. The sub-Hamiltonian $\eta$ induces a metric $g$ on the sub-Finsler bundle. In terms of this metric, the sub-Hamiltonian function $\eta$ can be expressed as a function of components $p_i$. Specifically, $\eta(x, p) = \frac{1}{2} \sum_{i,j=1}^{n} {g}^{ij} p_i p_j$, where $g^{ij}$ is the inverse of the metric tensor $g_{ij}$ for the extended Finsler metric $\hat{F}$ on $TM$, kindly check Remark \ref{EXT}. This defines a sub-Finsler metric on a subbundle $\mathcal{D}$ of $TM$ that is determined by a distribution on $M$. %Further as $\eta(x, p) = \frac{1}{2} \sum_{i=1}^{k} \langle p, X_i(p) \rangle^2$, notice that $X_i(p)$ is a covector field that depends on the position $x \in M$ and the direction $p \in \mathcal{D}^*$. Moreover, one can choose in such a way that $X_i(p)$ is a homogeneous of degree zero in $p$.
\end{rem}

Now fixing a point $x \in M$, for any covector $p \in \mathcal{D}^*$, there exists a unique {\em sub-Hamiltonian vector field} on $\mathcal{D}^*$,  denoted by $\vec{H}$, described by
 \begin{equation} \label{SH}
   \vec{H}= \frac{\partial \eta}{\partial p_i} \frac{\partial}{\partial x^i} - \frac{\partial \eta}{\partial x^i} \frac{\partial}{\partial p_i}.
 \end{equation}
 where the partial derivatives are taken with respect to the local coordinates $(x^i, p_i)$ on $\mathcal{D}^* \subset T^*M$.

\begin{definition} \label{NO}
The sub-Hamiltonian equations on $\mathcal{D}^*$ are then given by
\begin{subequations}\label{HE}
\begin{align}
  \frac{\partial \eta}{\partial p_i}  & = g^{ij} p_j, \label{HE1} \\
  \frac{\partial \eta}{\partial x^i}  & = -\frac{1}{2} \frac{\partial g^{jk}}{\partial x^i} p_j p_k.  \label{HE2}
\end{align}
\end{subequations}
\end{definition}
 These equations express the fact that the sub-Hamiltonian vector field $\vec{H}$ preserves the sub-Finsler metric $F^*$ on $\mathcal{D}^*$. If the Hamiltonian is independent of the cotangent variables $p_i$, then the second equation above reduces to the Hamilton-Jacobi equation for the sub-Finsler manifold $(M, \mathcal{D}, F)$.
\begin{rem} \label{EXT}
We extended sub-Finsler metrics to full Finsler metrics using an orthogonal complement subbundle in \cite{L.K}. However, here are more details and evidence.

Given a subbundle $\mathcal{D}$ of the tangent bundle $TM$, its direct complement $\mathcal{D}^\perp$ is a subbundle of $TM$ such that $TM = \mathcal{D} \oplus \mathcal{D}^\perp$, and at every point $x \in M$, $\mathcal{D}_x \cap \mathcal{D}_x^\perp = {0}$ and $\mathcal{D}_x + \mathcal{D}_x^\perp = T_xM$.

One canonical way to obtain a direct complement to $\mathcal{D}$ is to use the notion of an orthogonal complement. Given a subbundle $\mathcal{D}$ of $TM$, we define the orthogonal complement bundle $\mathcal{D}^\perp$ as follows:
\begin{equation*}
\mathcal{D}^\perp_x = \{v \in T_xM :\langle v,w\rangle=0 \text{ for all } w \in \mathcal{D}_x\},
\end{equation*}
such that $v, w$ are orthogonal with respect to the inner product induced by the Riemannian metric.
It can be shown that $\mathcal{D}^\perp$ is a subbundle of $TM$ and satisfies the conditions for being a direct complement to $\mathcal{D}$. Moreover, it can be shown that any two direct complements to $\mathcal{D}$ are isomorphic bundles, so the orthogonal complement is unique up to bundle isomorphism.

Note that if $M$ is equipped with a sub-Finsler metric, then the metric induces a non-degenerate inner product on $\mathcal{D}$, so we can use this inner product to define the orthogonal complement. However, if $M$ is not equipped with a Riemannian metric, then the notion of an orthogonal complement may not be well-defined. So, to extend a given sub-Finsler metric $F$ on a subbundle $\mathcal{D}$ of $TM$ to a full Finsler metric on $TM$, one can use an orthogonal complement subbundle $\mathcal{D}^{\perp}$. This is a regular subbundle of $TM$ that is orthogonal to $\mathcal{D}$ with respect to the Riemannian metric $g_{ij}$. Locally, $\mathcal{D}^{\perp}$ can be written as:

\begin{equation}
\mathcal{D}^{\perp} = \text{span}\{ X'_1,\ldots,X'_{n-k}\},
\end{equation}

where $k$ is the rank of the subbundle $\mathcal{D}$ and $X'_1,\ldots,X'_{n-k}$ are local vector fields that form a basis for $\mathcal{D}^{\perp}$. Then, one can define a Finsler metric $\hat{F}$ on $TM$ by:

\begin{equation} \label{HF}
\hat{F}(v) = \sqrt{ F^2(P(v)) + \widetilde{F}^2(P^{c}(v)) },
\end{equation}

where $P$ is the projection onto $\mathcal{D}$, $P^{c}$ is the projection onto $\mathcal{D}^{\perp}$, and $\widetilde{F}$ is a Finsler metric on $\mathcal{D}^{\perp}$. This construction yields a full Finsler metric on $TM$ that extends the sub-Finsler metric $F$ on $\mathcal{D}$. Note that the Finsler metric $\widetilde{F}$ on $\mathcal{D}^{\perp}$ is not unique, so the choice of $\widetilde{F}$ is arbitrary. However, the resulting Finsler metric $\hat{F}$ on $TM$ is unique and independent of the choice of $\widetilde{F}$.

To see this, suppose we have two choices of Finsler metrics $\widetilde{F}$ and $\widetilde{F}'$ on $\mathcal{D}^{\perp}$. Let $\hat{F}$ and $\hat{F}'$ be the corresponding extensions of $F$ to $TM$ using Equation \ref{HF}. Then for any $v \in TM$, we have
\begin{align*}
\hat{F}^2(v) &= F^2(P(v)) + \widetilde{F}^2(P^c(v)) \\
\hat{F}'^2(v) &= F^2(P(v)) + \widetilde{F}'^2(P^c(v)).
\end{align*}
Subtracting these two equations, we obtain
\begin{equation*}
\hat{F}'^2(v) - \hat{F}^2(v) = \widetilde{F}'^2(P^c(v)) - \widetilde{F}^2(P^c(v)).
\end{equation*}
Since $v$ can be decomposed uniquely as $v = v_{\parallel} + v_{\perp}$ with $v_{\parallel} \in \mathcal{D}$ and $v_{\perp} \in \mathcal{D}^{\perp}$, we have $P^c(v) = v_{\perp}$, and the right-hand side of the above equation depends only on $v_{\perp}$. Since the choice of $\widetilde{F}$ on $\mathcal{D}^{\perp}$ is arbitrary, we can choose $\widetilde{F}$ and $\widetilde{F}'$ to be equal except on a single vector $v_{\perp}$, in which case $\widetilde{F}'^2(P^c(v)) - \widetilde{F}^2(P^c(v))$ will be nonzero only for that vector. Therefore, we have $\hat{F}'^2(v) - \hat{F}^2(v) \neq 0$ only for that vector, and hence $\hat{F} = \hat{F}'$.

Therefore, we have shown that the resulting Finsler metric $\hat{F}$ on $TM$ is unique and independent of the choice of $\widetilde{F}$.
\end{rem}

Let us turn to define the normal and abnormal extremals:

The projection $x(t)$ to $M$ is called a {\em normal extremal}.
 One can see that every sufficiently short subarc of the normal extremal $x(t)$ is a minimizer sub-Finslerian geodesic. This subarc is the unique minimizer joining its endpoints (see \cite{L.K1, L3}).
 In the sub-Finslerian manifold, not all the sub-Finslerian geodesics are normal (contrary to the Finsler manifold). This is because the sub-Finslerian geodesics, which admit a minimizing geodesic, might not solve the sub-Hamiltonian  equations. Those minimizers that are not normal extremals are called {\it singular} or {\it abnormal extremals}, (see for instance \cite{Mo02}).
 Even in the sub-Finslerian case, Pontryagin's maximum principle implies that every minimizer of the arc length of the horizontal curves is a normal or abnormal extremal.
 \subsection{Non-Linear Connections on a sub-Finsler manifolds}
 \begin{definition}
An $\mathcal{L}$-{\em connection} $\nabla$ on a sub-Finsler manifold is a {generalized non-linear connection} over the induced mapping
\begin{equation} \label{E-def}
E:T^*M \to TM, \ \ \ E(\alpha(x))= \mathbf{i}(\mathcal{L}_{\eta}(\mathbf{i}^*(\alpha(x))))\in TM,
\end{equation} constructed by Legendre transformation $\mathcal{L}_{\eta}: \mathcal{D}^* \subset T^*M \to \mathcal{D} \subset TM$ by (\ref{E-def}), where $\mathbf{i}^*:T^*M \to \mathcal{D}^*$ is the adjoint mapping of $\mathbf{i}:  \mathcal{D}  \rightarrow TM$, i.e. for any
$\alpha(x) \in \mathfrak{X}^*(M),$ $\mathbf{i}^*(\alpha(x))$ is determined by $$\langle X(x), \mathbf{i}^*(\alpha(x))\rangle = \langle \mathbf{i}(X(x)), \alpha(x) \rangle \ \text {for all} \ X(x) \in \mathfrak{X}(M),$$
such that $\langle v, \alpha \rangle := \alpha(v)$ for all $v \in \mathcal{D}, \alpha \in \mathcal{D}^*$. For more details about the settings of the $\mathcal{L}$- connection $\nabla$, we refer the reader to \cite{L.K}. Obviously, $E$ is a bundle mapping whose image set is precisely the subbundle $\mathcal{D}$ of $TM$ and whose kernel is the annihilator $\mathcal{D}^0$ of $\mathcal{D}$.
\end{definition}
Moreover, we recall the {\em Barthel non-linear connection} $\overline{\nabla}^B$ of the cotangent bundle as follows
$$\overline{\nabla}^B_X\alpha (Y) = X(\alpha(Y))- \alpha (\nabla^B_X Y),$$
where the Berwald connection $\nabla^B$ on the tangent bundle was locally given by
\begin{equation}
N_j^i= \frac{1}{2}\frac{\partial G^i}{\partial v^j};\quad
G^i = g^{ij}\left ( \frac{\partial^2 L}{\partial v^j \partial x^k}v^k - \frac{\partial L}{\partial x^j } \right). \label{barthel}
\end{equation}
The Barthel nonlinear connection plays the same role in the positivity homogeneous case as the Levi-Civita connection in Riemannian geometry, see \cite{K03}.
\begin{definition} \label{GEO}
A curve $\alpha:[0,1] \to T^*M$ is said to be $E$-{\it admissible} if $E(\alpha(t))=\dot{\sigma}(t)\ \forall t \in [0,1]$ such that $\pi_M:T^*M \to M$ is the natural cotangent bundle projection. An {\em auto-parallel} curve is the $E$-admissible curve with respect to $\mathcal{L}$-connection $\nabla$ if it satisfies $\nabla_{\alpha} \alpha(t) = 0$  for all $t \in [0,1]$. The {\em geodesic} of $\nabla$ is just the base curve $\gamma= \pi_M \circ \alpha$ of the auto-parallel curve.
\end{definition}
In coordinates, an auto-parallel curve $\alpha(t)=(x^i(t), p_i(t))$ satisfies the equations
$$\dot{x}^i(t) = g^{ij} (x(t), p(t)) p_j(t), \qquad  \dot{p}_i(t) = - \Gamma^{jk}_i(x(t), p(t)) p_j(t)p_k(t),$$
such that $g^{ij}$ and $\Gamma^{ik}_j $ are the local components of the contravariant tensor field
of $TM\otimes TM \to M$ associated with the sub-Hamiltonian structure and the connection coefficients of $\nabla$, respectively.
In fact, given a non-linear $\mathcal{L}$-connection $\nabla$ we can always introduce a smooth vector field
$\Gamma^{\nabla}$ on $\mathcal{D}^*$, in addition, their integral curves are auto-parallel curves in relation to $\nabla$.
In canonical coordinates, this vector field given by
$$\Gamma^{\nabla}(x, p)= g^{ij}(x, p)p_j \frac{\partial}{\partial x^i}-  \Gamma^{ik}_j(x, p) p_ip_k\frac{\partial}{\partial p_j}.$$
In \cite{L.K}, we proved that every geodesic of $\nabla$ is a normal extremal, and vice versa. More precisely, we have shown that the coordinate expression for the sub-Hamiltonian vector field (this is another form of (\ref{SH})) $\vec{H}$ equals:
$$\vec{H}(x, p) = g^{ij}(x, p)p_j  \frac{ \partial}{\partial x^i} - \frac{1}{2} \frac{\partial g^{ij}}{\partial x^k}(x, p) p_ip_j \frac{ \partial}{\partial p_k}.$$
Comparing the latter formula with the definition of $\Gamma^{\nabla}$,
yields that
 $\Gamma^{\nabla}(x, p)= \vec{H}(x, p)$.

\subsection{Variational approach to the length functional and its relation to sub-Hamiltonian equations on sub-Finsler manifolds}
 We can consider a small variation $\psi(s,t)$ of the curve $\sigma(t)$ such that $\psi(s,0)$ and $\psi(s,1)$ are fixed at $x_0$ and $x_1$, respectively, and $\psi(0,t) = \sigma(t)$ for all $t\in[0,1]$. We can think of $\psi(s,t)$ as a one-parameter family of curves in the set of all curves joining $x_0$ and $x_1$, and we can consider the variation vector field $v(t) = \frac{\partial \psi}{\partial s}(0,t)$, which is tangent to the curve $\sigma(t)$.

Then, we can define the directional derivative of the length functional $\ell$ along the variation vector field $v$ as
\begin{equation}\label{Var1}
\mathbf{d}\ell(\sigma)\cdot v = \frac{d}{ds}\Big|_{s=0} \ell(\psi(s,\cdot)).
\end{equation}
Note that $\ell(\psi(s,\cdot))$ is the length of the curve $\psi(s,\cdot)$, which starts at $x_0$ and ends at $x_1$. Therefore, $\frac{d}{ds}\Big|_{s=0} \ell(\psi(s,\cdot))$ is the rate of change of the length of the curve as we vary it along the vector field $v$.

By chain rule, we can write
$$\frac{d}{ds}\Big|_{s=0} \ell(\psi(s,\cdot)) = \int_0^1 \frac{\partial \ell}{\partial x^a}(\sigma(t))\frac{\partial \psi^a}{\partial s}(0,t)dt,$$
where $\frac{\partial \ell}{\partial x^a}$ is the gradient of the length functional. Using the fact that $\psi(s,t)$ is a variation of $\sigma(t)$ and $\psi(0,t) = \sigma(t)$, we can express $\frac{\partial \psi^a}{\partial s}(0,t)$ in terms of the variation vector field $v$ as
$$\frac{\partial \psi^a}{\partial s}(0,t) = \frac{\partial}{\partial s}\Big|_{s=0} \psi^a(s,t) = \frac{\partial}{\partial s}\Big|_{s=0} \sigma^a(t) + s\frac{\partial}{\partial t}\Big|{t=t} v^a(t) = v^a(t).$$
Therefore, we obtain
$$\frac{d}{ds}\Big|_{s=0} \ell(\psi(s,\cdot)) = \int_0^1 \frac{\partial \ell}{\partial x^a}(\sigma(t))v^a(t)dt = \mathbf{d}\ell(\sigma)\cdot v,$$
which gives the desired equation (\ref{Var1}).

Let us clarify the correct relationship between the sub-Hamiltonian equations and the length functional.

Given a sub-Finsler manifold $(M, \mathcal{D}, F)$, the sub-Hamiltonian equations on $M$ are given by
\begin{equation}\label{SHE}
  \frac{d}{dt}\left(\frac{\partial F}{\partial p_a}(\sigma(t))\right) = -\frac{\partial F}{\partial x^a}(\sigma(t)),
\end{equation}
where $\sigma: [0,1] \rightarrow M$ is a horizontal curve in $M$ with $\sigma(0) = x_0$ and $\sigma(1) = x_1$.

On the other hand, the length functional on $M$ is defined as
$$\ell(\sigma) = \int_0^1 F(\sigma(t), \dot{\sigma}(t)),dt,$$
where $\sigma$ is a horizontal curve in $M$ with $\sigma(0) = x_0$ and $\sigma(1) = x_1$.

It will be shown (see Proposition 6) that a curve $\sigma$ is a solution to the sub-Hamiltonian equations if and only if it is a critical point of the length functional $\ell$. In other words, if $\sigma$ satisfies the sub-Hamiltonian equations, then $\mathbf{d}\ell(\sigma) = 0$, and conversely, if $\sigma$ is a critical point of $\ell$, then it satisfies the sub-Hamiltonian equations.
\begin{proposition}\label{LFH}
A horizontal curve $\sigma:[0,1]\rightarrow M$ joining $\sigma(0)=x_0$ with $\sigma(1)=x_1$ in $M$ is a solution to the sub-Hamiltonian equations if and only if it is a critical point of the length functional $\ell$. That is, if and only if $\mathbf{d}\ell(\sigma) = 0$.
\end{proposition}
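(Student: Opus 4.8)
The plan is to compute the first variation of $\ell$ along admissible (horizontality-preserving) variations and to match the resulting stationarity condition with the sub-Hamiltonian equations through the Legendre transform $\mathcal{L}_L\colon\mathcal{D}\to\mathcal{D}^*$. Because $\ell$ is invariant under orientation-preserving reparametrizations, I would first restrict attention to horizontal curves parametrized proportionally to arc length, or equivalently pass to the energy functional $\mathcal{E}(\sigma)=\tfrac12\int_0^1 F(\dot\sigma(t))^2\,dt$, whose critical points among constant-speed horizontal curves coincide with those of $\ell$; this removes the radial degeneracy of the $1$-homogeneous integrand and lets me exploit the positive definiteness (condition~3 of the definition of a sub-Finsler metric) of the Hessian of $L=\tfrac12 F^2$ on $\mathcal{D}$.

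Next, for a one-parameter family $\psi(s,t)$ of horizontal curves with $\psi(s,0)=x_0$, $\psi(s,1)=x_1$, $\psi(0,\cdot)=\sigma$ and variation field $v(t)=\partial_s\psi(0,t)$ satisfying $v(0)=v(1)=0$, I would differentiate under the integral sign, commute $\partial_s$ with $\partial_t$, and integrate by parts; the boundary terms drop because $v$ vanishes at the endpoints, yielding
\begin{equation*}
\mathbf{d}\ell(\sigma)\cdot v=\int_0^1\Bigl(\frac{\partial F}{\partial x^a}(\sigma,\dot\sigma)-\frac{d}{dt}\,\frac{\partial F}{\partial\dot x^a}(\sigma,\dot\sigma)\Bigr)\,v^a(t)\,dt ,
\end{equation*}
and the analogous identity for $\mathcal{E}$ with $L$ replacing $F$. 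The ``only if'' direction is then immediate: if $\sigma$ solves the sub-Hamiltonian (Euler--Lagrange) equations \eqref{SHE}, the integrand vanishes identically and $\mathbf{d}\ell(\sigma)=0$.

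The ``if'' direction is the delicate one, and I expect the main obstacle to be precisely the constraint: the fields $v$ are not arbitrary, since each $\psi(s,\cdot)$ must remain horizontal, so the fundamental lemma of the calculus of variations cannot be applied to the displayed integral as it stands. I would resolve this in the standard Pontryagin manner, adjoining the horizontality constraint with a multiplier curve $p(t)$ valued in the fibres of $\mathcal{D}^*$ --- equivalently, lifting the problem to $T^*M$ modulo the annihilator $\mathcal{D}^0$, as in the bundle map \eqref{E-def} --- so that stationarity of $\ell$ along all admissible variations becomes the Hamiltonian system for the pair $(\sigma,p)$. The bracket-generating hypothesis provides a supply of admissible variations rich enough for this reduction to be effective; it is also here that the normal/abnormal dichotomy enters, ``critical point of $\ell$'' being understood as vanishing first variation for the multiplier problem with nonzero cost multiplier, which singles out the normal extremals and sets aside the abnormal ones, for which the reduction degenerates.

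Finally I would identify the two descriptions. Setting $p_a=\partial L/\partial\dot x^a$ and using, from Section~3, that $\mathcal{L}_L$ is a diffeomorphism $\mathcal{D}\to\mathcal{D}^*$ with $F^*(p)=F(v)$ and $\eta=\tfrac12(F^*)^2$, a direct computation turns the Euler--Lagrange equations for $L$ into
\begin{equation*}
\dot x^i=\frac{\partial\eta}{\partial p_i}=g^{ij}p_j,\qquad \dot p_i=-\frac{\partial\eta}{\partial x^i}=-\tfrac12\,\frac{\partial g^{jk}}{\partial x^i}\,p_jp_k ,
\end{equation*}
which are exactly the sub-Hamiltonian equations \eqref{HE} and the integral-curve equations of $\vec H=\Gamma^{\nabla}$ recorded after Definition~\ref{GEO}. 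Hence $\mathbf{d}\ell(\sigma)=0$ if and only if $\sigma$ solves the sub-Hamiltonian equations. Apart from the constant-speed normalization needed to pass between $\ell$ and $\mathcal{E}$ and the careful bookkeeping of the constraint in the ``if'' direction, the remaining steps are a routine first-variation computation combined with the already-established equivalence $\Gamma^{\nabla}=\vec H$.
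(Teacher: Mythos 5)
Your proposal is sound in outline and, in the converse direction, takes a genuinely different and more careful route than the paper. For the ``only if'' direction the two arguments are essentially the same: both reduce to identifying the Euler--Lagrange equations of the integrand with the sub-Hamiltonian system via the Legendre transform, except that the paper works with the auxiliary Lagrangian $L=F\sqrt{\det(g_{ij})}$ while you normalize to constant speed and pass to the energy functional $\tfrac12\int_0^1F^2\,dt$, which is the cleaner way to remove the radial degeneracy of the $1$-homogeneous integrand. The real divergence is in the ``if'' direction. The paper writes the first variation as $\int_0^1\langle\partial F/\partial x^a,\delta x^a\rangle\,dt$ (without the $\frac{d}{dt}\,\partial F/\partial\dot x^a$ term that integration by parts produces), applies the fundamental lemma to variations treated as unconstrained to conclude $\partial F/\partial x^a=0$, and then invokes the sub-Hamiltonian equations themselves to deduce constancy of the momenta before reassembling them on subintervals. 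You instead keep the horizontality constraint in view, observe correctly that admissible variation fields are not arbitrary so the fundamental lemma cannot be applied to the displayed integral as it stands, and adjoin the constraint with a multiplier curve $p(t)$ valued in $\mathcal{D}^*$, recovering the Hamiltonian system for the pair $(\sigma,p)$ in the Pontryagin manner and then translating back through $\mathcal{L}_L$ and $\eta=\tfrac12(F^*)^2$. This buys two things the paper's version does not make explicit: a converse that does not presuppose the equations being derived, and the observation that the stated equivalence holds only for normal extremals --- abnormal minimizers are critical points of the constrained length functional without solving the sub-Hamiltonian equations, so the proposition needs exactly the nonzero-cost-multiplier caveat you supply. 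The steps you leave to be carried out (the multiplier reduction and the use of the bracket-generating hypothesis to produce enough admissible variations) are standard, so the plan is workable as written.
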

\begin{proof}
%We have two main directions to establish:
%If $\sigma$ satisfies the sub-Hamiltonian equations, then it is a critical point of $\ell$, i.e., $\mathbf{d}\ell(\sigma) = 0$.
%Conversely, if $\sigma$ is a critical point of $\ell$, then it satisfies the sub-Hamiltonian equations.
We will begin by proving the first direction:

Assume that $\sigma$ satisfies the sub-Hamiltonian equations. Then, we have
$$\frac{d}{dt}\left(\frac{\partial F}{\partial p_a}(\sigma(t))\right) = -\frac{\partial F}{\partial x^a}(\sigma(t)),$$
for all $a=1,\ldots,m$ and $t\in[0,1]$. Note that $\frac{\partial F}{\partial p_a}$ is the conjugate momentum of $x^a$, and we can write the sub-Finsler Lagrangian as
$$L(x,\dot{x}) = F(x,\dot{x})\sqrt{\det(g_{ij}(x))},$$
where $g_{ij}(x) = \frac{\partial^2 F^2}{\partial \dot{x}^i \partial \dot{x}^j}(x,\dot{x})$ is the sub-Finsler metric tensor. Then, the length functional can be written as
$$\ell(\sigma) = \int_0^1 L(\sigma(t),\dot{\sigma}(t)),dt.$$
Using the Euler-Lagrange equation for the Lagrangian $L$, we have
$$\frac{d}{dt}\left(\frac{\partial L}{\partial \dot{x}^a}(\sigma(t),\dot{\sigma}(t))\right) - \frac{\partial L}{\partial x^a}(\sigma(t),\dot{\sigma}(t)) = 0,$$
for all $a=1,\ldots,m$ and $t\in[0,1]$. Since $L$ depends only on $\dot{x}$ and not on $x$ explicitly, we can write this as
$$\frac{d}{dt}\left(\frac{\partial F}{\partial \dot{x}^a}(\sigma(t))\sqrt{\det(g_{ij}(\sigma(t)))}\right) - \frac{\partial F}{\partial x^a}(\sigma(t))\sqrt{\det(g_{ij}(\sigma(t)))} = 0,$$
for all $a=1,\ldots,m$ and $t\in[0,1]$. Using the chain rule and the fact that $\sigma$ is horizontal curve, we can write this as
$$\frac{d}{dt}\left(\frac{\partial F}{\partial p_a}(\sigma(t))\right) - \frac{\partial F}{\partial x^a}(\sigma(t)) = 0,$$
for all $a=1,\ldots,m$ and $t\in[0,1]$. This is exactly the condition for $\sigma$ to be a critical point of $\ell$, i.e., $\mathbf{d}\ell(\sigma) = 0$.

Now, let us proceed to prove the second direction:

Assume that $\sigma$ is a critical point of $\ell$, i.e., $\mathbf{d}\ell(\sigma) = 0$. Then, for any smooth variation $\delta\sigma:[0,1]\rightarrow TM$ with $\delta\sigma(0) = \delta\sigma(1) = 0$, we have
$$0 = \mathbf{d}\ell(\sigma)(\delta\sigma) = \int_0^1\left\langle\frac{\partial F}{\partial x^a}(\sigma(t)),\delta x^a(t)\right\rangle dt,$$
where $\delta x^a(t) = \frac{d}{ds}\bigg|_{s=0}x^a(\sigma(t)+s\delta\sigma(t))$ is the variation of the coordinates $x^a$ induced by $\delta\sigma$. Note that we have used the fact that $\delta\sigma(0) = \delta\sigma(1) = 0$ to get rid of boundary terms.

Since $\delta\sigma$ is arbitrary, this implies that
$$\frac{\partial F}{\partial x^a}(\sigma(t)) = 0,$$
for all $a=1,\ldots,m$ and $t\in[0,1]$. Using the sub-Hamiltonian equations, we can write this as
$$\frac{d}{dt}\left(\frac{\partial F}{\partial p_a}(\sigma(t))\right) = 0,$$
for all $a=1,\ldots,m$ and $t\in[0,1]$.
This implies that $\frac{\partial F}{\partial p_a}$ is constant along $\sigma$. Since $\sigma$ is horizontal curve, we can choose a partition $0=t_0<t_1<\cdots<t_n=1$ such that $\sigma$
is smooth on each subinterval $[t_{i-1},t_i]$. Let $c_a$ be the constant value of $\frac{\partial F}{\partial p_a}$ on $\sigma$.

Then, for each $i=1,\ldots,n$, we have
$$\frac{d}{dt}\left(\frac{\partial F}{\partial p_a}(\sigma(t))\right) = 0,$$
for all $a=1,\ldots,m$ and $t\in[t_{i-1},t_i]$. This implies that
$$\frac{\partial F}{\partial p_a}(\sigma(t)) = c_a,$$
for all $a=1,\ldots,m$ and $t\in[t_{i-1},t_i]$. Since $\frac{\partial F}{\partial p_a}$ is the conjugate momentum of $x^a$, this implies that $\sigma$ satisfies the sub-Hamiltonian equations on each subinterval $[t_{i-1},t_i]$.

Therefore, $\sigma$ satisfies the sub-Hamiltonian equations on the whole interval $[0,1]$, which completes the proof of second direction.
\end{proof}
 \begin{corollary}
  If $\sigma:[0,1]\rightarrow M$ is a horizontal curve that minimizes the length functional $\ell$ between two points $x_0$ and $x_1$ on a sub-Finsler manifold $(M, \mathcal{D}, F)$, then $\sigma$ is a smooth geodesic between $x_0$ and $x_1$. Conversely, if $\sigma$ is a smooth geodesic between $x_0$ and $x_1$, then its length $\ell(\sigma)$ is locally minimized.
 \end{corollary}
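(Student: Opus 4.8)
The plan is to deduce the corollary from Proposition~\ref{LFH} together with two facts recalled earlier in the text: that the solutions of the sub-Hamiltonian equations \eqref{HE} are exactly the normal extremals, equivalently the geodesics of the $\mathcal{L}$-connection $\nabla$ (so that $\Gamma^{\nabla}=\vec{H}$), and that every sufficiently short subarc of a normal extremal is the unique length-minimizer joining its endpoints.

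\emph{Forward direction.} Suppose $\sigma$ minimizes $\ell$ among horizontal curves joining $x_0$ to $x_1$, with $x_0\neq x_1$ so that $\sigma$ is non-constant. Reparametrizing $\sigma$ by constant speed changes neither $\ell(\sigma)$ nor the minimizing property, since $\ell$ is reparametrization-invariant; then any horizontal variation with fixed endpoints produces curves of length $\ge\ell(\sigma)$, so the first variation vanishes, $\mathbf{d}\ell(\sigma)=0$. By Proposition~\ref{LFH}, $\sigma$ solves the sub-Hamiltonian equations, i.e.\ $\sigma=\pi_M\circ\alpha$ for an integral curve $\alpha$ of $\vec{H}$ on $\mathcal{D}^*\setminus 0$. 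Since $\eta=\tfrac12(F^*)^2$ is smooth on $\mathcal{D}^*\setminus 0$, the field $\vec{H}$ of \eqref{SH} is smooth there, so $\alpha$, hence $\sigma$, is $C^\infty$ by the standard regularity/bootstrap theory for smooth ODEs. As $\sigma$ realizes $d(x_0,x_1)$ by hypothesis and is a normal extremal, it is a smooth (minimizing) geodesic, and by the discussion preceding Section~3.3 it is a geodesic of $\nabla$.

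\emph{Converse.} Let $\sigma$ be a smooth geodesic, understood here as a geodesic of the $\mathcal{L}$-connection, equivalently $\sigma=\pi_M\circ\alpha$ where $\alpha$ is an integral curve of $\Gamma^{\nabla}=\vec{H}$, that is, a normal extremal. By the property recalled at the start of Section~3.3 (and \cite{L.K1, L3}), every sufficiently short subarc of $\sigma$ is the unique minimizer between its endpoints. Hence $\ell$ is locally minimized along $\sigma$: for each $t_0\in[0,1]$ there is $\varepsilon>0$ with $\ell\big(\sigma|_{[t_0-\varepsilon,\,t_0+\varepsilon]\cap[0,1]}\big)=d\big(\sigma(t_0-\varepsilon),\sigma(t_0+\varepsilon)\big)$, and compactness of $[0,1]$ yields a uniform such $\varepsilon$.

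\emph{Main obstacle.} The delicate point is the forward direction. First, since $\ell$ is reparametrization-invariant, a ``critical point of $\ell$'' is only meaningful after fixing the parametrization; passing to the energy functional $E=\tfrac12\int_0^1 F^2(\dot\sigma)\,dt$, whose fixed-interval minimizers are precisely the constant-speed length minimizers, makes this rigorous and also keeps $\dot\sigma$ off the zero section where $\eta$ is not smooth. Second, and more seriously, a sub-Finsler length-minimizer need not be a normal extremal: Pontryagin's maximum principle (see \cite{Mo02}) only gives that it is normal \emph{or} abnormal, and strictly abnormal minimizers may fail to be $C^\infty$. So the statement as phrased tacitly excludes strictly abnormal minimizers; within the paper's framework this is absorbed into the appeal to Proposition~\ref{LFH}, which supplies the normal lift $\alpha$ from $\mathbf{d}\ell(\sigma)=0$, but it is worth flagging that the argument rests on this.
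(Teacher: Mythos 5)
Your proposal is correct and follows the same route the paper intends: the paper's entire proof of this corollary is the single sentence that it ``follows directly from Proposition \ref{LFH},'' so you are deducing the statement from exactly the ingredients the author has in mind (Proposition \ref{LFH} for the forward direction, and the recalled fact that short subarcs of normal extremals are unique minimizers for the converse). The difference is that you actually supply the argument the paper omits: the reduction of ``minimizer'' to ``critical point'' via reparametrization invariance and the energy functional, the smoothness of $\sigma$ via the smooth sub-Hamiltonian flow on $\mathcal{D}^*\setminus 0$, and the compactness argument for local minimization in the converse. Your ``main obstacle'' paragraph identifies a genuine weakness that the paper silently inherits from Proposition \ref{LFH}: a sub-Finsler length minimizer may be a strictly abnormal extremal, in which case it need not solve the sub-Hamiltonian equations and need not be smooth, so the forward direction is only valid under the tacit exclusion of strictly abnormal minimizers. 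Since the paper itself acknowledges earlier that not all sub-Finslerian minimizers are normal, this caveat is a real gap in the corollary as stated, and your proof is the honest version of what the one-line proof asserts.
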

 \begin{proof}
   The proof of this corollary follows directly from Proposition \ref{LFH}.
 \end{proof}

The Proposition \ref{LFH} establish the significance of the results in the context of sub-Hamiltonian equations and curve optimization on a sub-Finsler manifold.
The corollary highlights the connection between curve optimization, geodesics, and the length functional on sub-Finsler manifolds. Collectively, these results provide deep insights into the geometric behavior of curves on sub-Finsler manifolds, linking the sub-Hamiltonian equations, length minimization, and the concept of geodesics in this context.

%\begin{rem}
%  From a physical point of view, the solution of sub-Hamiltonian equations of motion yields a trajectory in terms of positions and momenta as functions of time. In addition, it is easy to check that equivalence between sub-Hamiltonian equations and Newton’s equations of motion for a point mass i.e. $F=ma$.
%\end{rem}

\section{Nonholonomic sub-Finslerian structure}
A sub-Finslerian structure is a generalization of a Finslerian structure,  where the metric on the tangent space at each point is only required to be positive-definite on a certain subbundle of tangent vectors.

A {\it nonholonomic sub-Finslerian structure} is a triple
$(M, \mathcal{D}, F)$ where $M$ is a smooth manifold of dimension $n$, $\mathcal{D}$
 is a non-integrable distribution of rank
$k <  n$ on $M$, which means that it cannot be generated by taking the Lie bracket of vector fields. This property leads to the nonholonomicity of the structure and has important implications for the geometry and dynamics of the system.  The regularity condition on $\mathcal{D}$ means that it can be locally generated by smooth vector fields, and the nonholonomic condition means that it cannot be integrable to a smooth submanifold of $M$.
The sub-Finslerian metric $F$ is a positive-definite inner product on the tangent space of $\mathcal{D}$ at each point of $M$. It is often expressed as a norm that satisfies the triangle inequality but does not necessarily have the homogeneity property of a norm. The metric $F$ induces a distance function on $M$, known as the sub-Riemannian distance or Carnot-Carathéodory distance, which is a natural generalization of the Riemannian distance.
 Mechanically, sub-Riemannian manifolds $(M, \mathcal{D}, g)$ and their generalization,  sub-Finslerian manifolds $(M, \mathcal{D}, F)$ are classified as configuration spaces \cite{L.K2}.

Nonholonomic sub-Finslerian structures arise in the study of control theory and robotics, where they model the motion of nonholonomic systems, i.e., systems that cannot achieve arbitrary infinitesimal motions despite being subject to arbitrary small forces. The motivation for this generalization comes from the need to provide a framework that captures the complexities of motion in such systems beyond what sub-Riemannian geometry alone can achieve. The study of these structures involves geometric methods, such as the theory of connections and curvature, and leads to interesting mathematical problems. This generalization not only extends the applicability of the theory to a wider class of problems but also paves the way for new insights into the geometric mechanics of nonholonomic systems.

\subsection{Nonholonomic Free Particle Motion under a Non-Linear Connection and Projection Operators}
We have the projection operator $P^* : T^*M \to \mathcal{D}^0$ that projects any covector $\alpha \in T^*M$ onto its horizontal component with respect to the non-linear connection induced by the distribution $\mathcal{D}$. More precisely, for any $Y \in TM$, we define $P(Y)$ to be the projection of $Y$ onto $\mathcal{D}$, and then $P^*(\alpha)(Y) = \alpha(Y - P(Y))$.

Next, we have the complement projection $(P^*)^c : T^*M \to (\mathcal{D}^\bot)^0$, which projects any covector $\alpha \in T^*M$ onto its vertical component with respect to the non-linear connection induced by the distribution $\mathcal{D}$. More precisely, for any $Y \in TM$, we define $P^\bot(Y)$ to be the projection of $Y$ onto $\mathcal{D}^\bot$, and then $(P^*)^c(\alpha)(Y) = \alpha(P^\bot(Y))$.

Now, we consider a nonholonomic free particle moving along a horizontal curve $\sigma : [0,1] \to M$. Let $\overline{\nabla}^B$ be a Barthel non-linear connection, (see \cite{L.K, La01}), and the condition $P^*(\overline{\nabla}^B_{\dot{\sigma}(t)} \dot{\sigma}(t)) = 0$ expresses the fact that the velocity vector $\dot{\sigma}(t)$ is constrained to be horizontal, while the constraint condition $\dot{\sigma}(t) \in \mathcal{D}^0$ expresses the fact that the velocity vector lies in the distribution $\mathcal{D}$.

Using the fact that $T^*M$ can be decomposed into its horizontal and vertical components with respect to the non-linear connection induced by the distribution $\mathcal{D}$, we can express any covector $\alpha \in T^*M$ as $\alpha = P^*(\alpha) + (P^*)^c(\alpha)$. Then, the constraint condition $\dot{\sigma}(t) \in \mathcal{D}^0$ can be written as $(P^*)^c(\mathrm{d}\sigma/\mathrm{d}t) = 0$.

Using the above decomposition of $\alpha$, we can rewrite the condition $P^*(\overline{\nabla}^B_{\dot{\sigma}(t)} \dot{\sigma}(t)) = 0$ as $P^*(\overline{\nabla}^B_{\dot{\sigma}(t)} \dot{\sigma}(t)) = P^*(\mathrm{d}\dot{\sigma}/\mathrm{d}t) = \mathrm{d}(P^*(\dot{\sigma}))/\mathrm{d}t = 0$, where we have used the fact that $P^*(\mathrm{d}\dot{\sigma}/\mathrm{d}t)$ is the derivative of the horizontal component of $\dot{\sigma}$ with respect to time, and hence is zero if $\dot{\sigma}$ is constrained to be horizontal.

Therefore, the conditions $P^*(\overline{\nabla}^B_{\dot{\sigma}(t)} \dot{\sigma}(t)) = 0$ and $(P^*)^c(\mathrm{d}\sigma/\mathrm{d}t) = 0$ together express the fact that the velocity vector $\dot{\sigma}(t)$ of the nonholonomic free particle is constrained to be horizontal and lie in the distribution $\mathcal{D}$, respectively.

Since $T^*M$ is identified with $TM$ via a Riemannian metric $g$, we have a natural isomorphism between $(\mathcal{D}^{\bot})^0$ and $\mathcal{D}^0$ given by the orthogonal projection. In particular, we have a direct sum decomposition of the cotangent bundle $T^*M$ as
$$T^*M \cong (\mathcal{D}^{\bot})^0 \oplus \mathcal{D}^0.$$
Note that any covector $\alpha \in T^*M$ can be uniquely decomposed as $\alpha = (P^*)^c(\alpha) + P^*(\alpha)$.

We can define a new {\it non-linear connection} $\overline{\nabla}$ on $(M, \mathcal{D}, F)$ according to
\begin{equation}\label{AAlA}
\overline{\nabla}_X (P^* (\alpha))(Y) =\overline{\nabla}^B_X (P^* (\alpha))(Y) +\overline{\nabla}^B_X ((P^*)^c (\alpha))(Y)
\end{equation} for all $X \in \mathfrak{X}(M)$ and $\alpha \in \mathfrak{X}^*(M)$. We restrict this connection to $\mathcal{D}^0$ and the equations of motion of the nonholonomic free particle can be re-written as $\overline{\nabla}_{\dot{\sigma}(t)} {\dot{\sigma}(t)}=0$, together with the initial velocity taken in $\mathcal{D}^0$ (see \cite{La01, Lewis}).

Given a nonholonomic sub-Finsler structure $(M, \mathcal{D}, F)$ one can always construct a normal and $\mathcal{D}$-adapted $\mathcal{L}$-connection \cite[Proposition 16]{L.K}.
Furthermore, we can construct a generalized non-linear connection over the vector bundle $\mathbf{i}: \mathcal{D} \to TM$, we will set $X \in \Gamma (\mathcal{D})$ with $\mathcal{L}_{\eta}(\mathbf{i} \circ X) \in \mathfrak{X}^*(M)$. So, attached to $(M, \mathcal{D}, F)$ there is a non-linear connection $\nabla^{H}: \Gamma (\mathcal{D}) \times \Gamma (\mathcal{D}^0) \to \Gamma (\mathcal{D}^0)$ called the {\it nonholonomic connection} over the adjoint mapping $\mathbf{i}:  \mathcal{D}  \rightarrow TM$ on natural projection $\tau: \mathcal{D}^0 \to M$ given by
$$\nabla^{H}_X \alpha(Y) = P^*(\overline{\nabla}^B_{X}\alpha(Y)).$$
Moreover, there is no doubt this indeed determines a non-linear connection, namely,
$$\nabla^{H}_{X}\alpha(Y)= \overline{\nabla}_X (P^* (\alpha))(Y),$$
such that $\overline{\nabla}$ is the non-linear connection given in (\ref{AAlA}), for all $X \in \Gamma(\mathcal{D})$ and $\alpha \in \Gamma(\mathcal{D}^0)$.
 In the nonholonomic setting, the horizontal curves are $\hat{\sigma}$ in $\mathcal{D}$ that are extensions of curves in $M$, i.e. $\hat{\sigma} (t)= \dot{\sigma}(t)$ for some curve $\sigma$ in $M$.
 \begin{definition} Let $(M, \mathcal{D}, F)$  be a nonholonomic sub-Finsler structure.
  A {\em nonholonomic bracket} $$[\cdot,\cdot] : \Gamma (\pi_{\mathcal{D}})\otimes \Gamma (\tau) \to \Gamma (\tau)$$ is defined as
  $[X, \alpha]= (P^*)^c[X, \alpha] $ for all $X \in  \Gamma (\pi_{\mathcal{D}})$, $\alpha \in  \Gamma (\tau)$,  $\pi_{\mathcal{D}} : \mathcal{D} \to M$ and $\tau: \mathcal{D}^* \to M$.
  This Lie bracket satisfies all the regular properties of the Lie bracket with the exception of the Jacobi identity.
   It may happen that the nonholonomic bracket $[X, \alpha]\notin \Gamma (\tau)$  because $\mathcal{D}^*$ is nonintegrable.
 \end{definition}
 Now, we can formally define the torsion operator
 $$T (X,\alpha):=\nabla^{H}_{X}\alpha -  \nabla^{H}_{\alpha}X- P^*[X, \alpha].$$
 In this setting, due to the symmetry of the non-linear connection $\nabla^{H}_{X}\alpha=\nabla^{H}_{\alpha}X$, the torsion $T (X, \alpha) = 0$  for all $X \in \Gamma(\mathcal{D})$ and $\alpha \in \Gamma(\mathcal{D}^0)$.
 Moreover,  \cite[Lemma 5]{L.K1}, implies that the non-linear connection $\nabla^{H}$ preserves the sub-Finsler metric $F$ on $\mathcal{D}$, i.e. $\nabla^{H}_{X}F=0$ for all $X \in \Gamma(\mathcal{D})$.  Therefore, there exists a unique conservative homogeneous nonlinear connection $\nabla^{H}$ with zero torsion
 and we can write the equations of motion for the given nonholonomic problem as $\nabla^{H}_{\dot{\sigma}(t)}\dot{\sigma}(t) = 0$, in such a way that $\sigma$ is a curve in $M$ tangent to $\mathcal{D}$.

There is a close relationship between nonholonomic constraints and the controllability of non-linear systems. More precisely,
there is a beautiful link between optimal control of nonholonomic systems and sub-Finsler geometry.
In the case of a large class of physically interesting systems, the optimal control problem is reduced to finding geodesics with respect to the sub-Finslerian metric.
The geometry of such geodesic flows is exceptionally rich and provides guidance for the design of control laws, for more information see Montgomery \cite{Mo02}.
We have seen in Section 2 that for each point $x \in M$,  we have the following distribution of rank $k$
$$\mathcal{D}= \textrm{span} \{X_{1}, \ldots, X_{k}\}, \qquad X_i(x) \in T_xM,$$
such that for any control function $u(t) = (u_1(t), \ldots, u_t(t))\in \mathbb{R}^k$ the control system is defined as
$$ \dot{x}=\sum_{i=1}^{k} u_iX_{i}(x), \qquad x \in M,$$
 is called a {\em nonholonomic control system} or {\em driftless control system} in the quantum mechanical sense, see \cite{L.K2}.

\subsection{Results}
The subsequent findings enhance comprehension of nonholonomic sub-Finslerian structures and their relevance in geometric mechanics. These insights offer essential tools for addressing and resolving issues concerning restricted movement within mathematical and physical domains. Specifically, these results shed light on the behavior of nonholonomic structures and their utility in analyzing constrained motion, particularly within the realm of geometric mechanics.
\begin{rem}\label{GI}
We call the distribution $\mathcal{D}$ a {\em geodesically invariant} if for every geodesic $\sigma: [0, 1] \to M$ of $\overline{\nabla}^B$, $\dot{\sigma}(0) \in \mathcal{D}_{{\sigma}(0)}$ implies that $\dot{\sigma}(t) \in \mathcal{D}_{{\sigma}(t)}$ for every $t \in (0,1]$.

  One can prove that if $(M, \mathcal{D}, F)$ is a sub-Finslerian manifold such that for any $x \in M$, $\mathcal{D}_x$ is a vector subspace of $T_xM$. The distribution $\mathcal{D}$ is geodesically invariant if and only if, for any $x \in M$ and any $v \in \mathcal{D}_x$, the Jacobi field along any geodesic $\gamma(t)$ with initial conditions $\gamma(0)=x$ and $\dot{\gamma}(0)=v$ is also in $\mathcal{D}$.

In other words, if the Jacobi fields along any geodesic with initial conditions in $\mathcal{D}$ remain in $\mathcal{D}$, then $\mathcal{D}$ is geodesically invariant. Conversely, if $\mathcal{D}$ is geodesically invariant, then any Jacobi field along a geodesic with initial conditions in $\mathcal{D}$ must also remain in $\mathcal{D}$. We leave the proof of this statement for future work.
\end{rem}
The following Proposition implies, in particular, that $\mathcal{D}$ is geodesically
invariant with respect to Barthel's non-linear connection $\overline{\nabla}^B$.
\begin{proposition} \label{12}
  \item
 \begin{itemize}
     \item [(I)] For each $X \in \mathfrak{X}(M)$ and $\alpha  \in \Gamma(\mathcal{D}^0)$, $\overline{\nabla}_X (P^*(\alpha))(Y)\in \Gamma(\mathcal{D}^0)$.
     \item [(II)] For each $X \in \mathfrak{X}(M)$ and $\alpha  \in \Gamma(\mathcal{D}^0)$, $\overline{\nabla}^B_X ((P^*)^c(\alpha))(Y) \in \Gamma(\mathcal{D}^0)$.
     \item [(III)] For each $X \in \mathfrak{X}(M)$ and $\alpha  \in \Gamma(\mathcal{D}^\bot)^0$, $\overline{\nabla}^B_X ((P^*)^c(\alpha))(Y)  \in \Gamma(\mathcal{D}^\bot)^0$.
   \end{itemize}
\end{proposition}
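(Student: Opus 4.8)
The plan is to prove the three inclusions by a direct computation in a local frame $\{X_1,\dots,X_k\}$ of $\mathcal{D}$ completed by a frame $\{X'_1,\dots,X'_{n-k}\}$ of the $g$-orthogonal complement $\mathcal{D}^\perp$, so that $TM=\mathcal{D}\oplus\mathcal{D}^\perp$ and, dually, $T^*M=\mathcal{D}^0\oplus(\mathcal{D}^\perp)^0$. Three inputs will be used repeatedly: (a) the defining identity \eqref{AAlA} for $\overline{\nabla}$; (b) the formula $\overline{\nabla}^B_X\alpha(Y)=X(\alpha(Y))-\alpha(\nabla^B_XY)$ together with the Barthel coefficients \eqref{barthel}; and (c) the elementary algebra of the projections — from $P^*+(P^*)^c=\mathrm{id}$, $\operatorname{Im}P^*=\mathcal{D}^0$ and $\operatorname{Im}(P^*)^c=(\mathcal{D}^\perp)^0$ one gets $P^*(\alpha)=\alpha$ and $(P^*)^c(\alpha)=0$ for $\alpha\in\Gamma(\mathcal{D}^0)$, and $(P^*)^c(\alpha)=\alpha$ and $P^*(\alpha)=0$ for $\alpha\in\Gamma((\mathcal{D}^\perp)^0)$. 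I would also record, from Remark \ref{EXT}, that the fundamental tensor $g_{ij}$ of the extended metric $\hat{F}$ is block-diagonal with respect to $\mathcal{D}\oplus\mathcal{D}^\perp$ — the $F^2$-part of $\hat{F}^2$ is supported on $\mathcal{D}$ and the $\widetilde{F}^2$-part on $\mathcal{D}^\perp$ — and hence so is its inverse $g^{ij}$; this is what separates the horizontal and vertical channels inside \eqref{barthel}.

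With this in place, (II) is immediate: if $\alpha\in\Gamma(\mathcal{D}^0)$ then $(P^*)^c(\alpha)=0$, so $\overline{\nabla}^B_X((P^*)^c(\alpha))=\overline{\nabla}^B_X(0)=0\in\Gamma(\mathcal{D}^0)$. For (III) I take $\alpha\in\Gamma((\mathcal{D}^\perp)^0)$, so that $(P^*)^c(\alpha)=\alpha$ annihilates $\mathcal{D}$; pairing $\overline{\nabla}^B_X\alpha$ with an arbitrary $Z\in\Gamma(\mathcal{D})$ gives $\overline{\nabla}^B_X\alpha(Z)=X(\alpha(Z))-\alpha(\nabla^B_XZ)=-\alpha(\nabla^B_XZ)$, since $\alpha(Z)\equiv0$, and the task reduces to showing that the $\mathcal{D}$-component of $\nabla^B_XZ$ is annihilated by $\alpha$. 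Writing $\nabla^B_XZ$ out with the coefficients $N^i_j$ of \eqref{barthel} and using the block-diagonality of $g^{ij}$, I expect the terms $\alpha$ can detect to collapse; this is the computation I would carry out carefully, and it yields $\overline{\nabla}^B_X((P^*)^c(\alpha))\in\Gamma((\mathcal{D}^\perp)^0)$. Claim (I) then follows by feeding $\alpha\in\Gamma(\mathcal{D}^0)$ into \eqref{AAlA}: the second summand vanishes by (II), and the first summand $\overline{\nabla}^B_X(P^*(\alpha))=\overline{\nabla}^B_X\alpha$ is handled by the computation dual to (III) — pair against $W\in\Gamma(\mathcal{D}^\perp)$, use $\alpha(W)\equiv0$ and the same block-diagonality with the two blocks interchanged — so that $\overline{\nabla}_X(P^*(\alpha))\in\Gamma(\mathcal{D}^0)$.

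Once (I) holds, $\overline{\nabla}$ descends to a genuine connection on the bundle $\mathcal{D}^0\to M$, which under the isomorphism $\mathcal{D}^0\cong\mathcal{D}^*$ is exactly what is needed for Remark \ref{GI} and for the assertion that $\mathcal{D}$ is geodesically invariant with respect to $\overline{\nabla}^B$. I expect the main obstacle to be precisely the computation underlying (III) and (I): because $\mathcal{D}$ is nonholonomic, the raw Barthel connection $\nabla^B$ does \emph{not} preserve $\mathcal{D}$, so the proof cannot proceed by the principle that a connection respects a distribution; it has to use the particular block form of $\hat{F}$ — equivalently, the way \eqref{AAlA} is arranged so that the mixing terms are projected out. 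Making that cancellation explicit, and checking that the mixing terms genuinely vanish against $\alpha$ rather than merely being reorganised, is where the real work lies.
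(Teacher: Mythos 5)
Your reduction of the three statements to the algebra of the projections is sound: under the (corrected) reading $P^*(\alpha)(Y)=\alpha(P(Y))$ and $(P^*)^c(\alpha)(Y)=\alpha(P^{\bot}(Y))$, your observations that $(P^*)^c(\alpha)=0$ for $\alpha\in\Gamma(\mathcal{D}^0)$ and $(P^*)^c(\alpha)=\alpha$ for $\alpha\in\Gamma((\mathcal{D}^{\bot})^0)$ are exactly right. This disposes of (II) and correctly exposes what (I) and (III) really assert, namely that $\overline{\nabla}^B$ preserves each of the two annihilator subbundles, equivalently that $\nabla^B$ maps $\Gamma(\mathcal{D})$ into $\Gamma(\mathcal{D})$ and $\Gamma(\mathcal{D}^{\bot})$ into $\Gamma(\mathcal{D}^{\bot})$. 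The gap is that this last statement --- which is the entire content of the proposition and is precisely the geodesic invariance the paper wants to deduce from it --- is deferred to ``the computation I would carry out carefully,'' with the expectation that pointwise block-diagonality of $g_{ij}$ with respect to $\mathcal{D}\oplus\mathcal{D}^{\bot}$ makes the mixing terms collapse. It does not. The Barthel coefficients \eqref{barthel} involve first $x$-derivatives of $L$, so the obstruction to $\alpha(\nabla^B_XZ)=0$ consists of Christoffel-type symbols with one index in the $\mathcal{D}^{\bot}$-block and the others in the $\mathcal{D}$-block; these are governed by $\partial g_{ij}/\partial x^k$, not by $g_{ij}$ itself, and they are exactly the second-fundamental-form terms (Lewis's symmetric product) whose vanishing is \emph{equivalent} to geodesic invariance of $\mathcal{D}$. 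A metric can be block-diagonal at every point while these mixed symbols are nonzero --- this is the generic situation for a nonintegrable $\mathcal{D}$ --- so the hoped-for cancellation is not a consequence of orthogonality, and your argument becomes circular at the decisive step.

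For comparison, the paper's proof does not pass through coordinates at all: it expands $\overline{\nabla}^B_X(\cdot)(Y)$ by the product rule and argues summand by summand that each term lies in the desired annihilator, inserting a projection where it does not. So the term you flag as ``the real work'' is indeed the crux, and neither your sketch nor a frame computation from block-diagonality alone will close it. To make the proposition provable you need an additional input: either the hypothesis that $\mathcal{D}$ and $\mathcal{D}^{\bot}$ are parallel (totally geodesic) for the extended metric $\hat F$ of Remark \ref{EXT}, or a verification that the mixed Berwald coefficients of the particular extension $\hat F$ vanish --- which is a constraint on the choice of extension, not a formal consequence of the direct-sum decomposition.
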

 \begin{proof}
 \item
\begin{itemize}
     \item [(I)] Let $X \in \mathfrak{X}(M)$ and $\alpha \in \Gamma(\mathcal{D}^0)$. Then, by the definition of the pullback connection, given in (\ref{AAlA}), and the Leibniz rule, we have
\begin{align*}
\overline{\nabla}_X(P^*(\alpha))(Y) &= X(P^*(\alpha)(Y)) - P^*(\alpha)(\overline{\nabla}_X(Y)) \\
&= X(\alpha(P(Y))) - \alpha(\overline{\nabla}_X(Y)) \\
&= \alpha(X(P(Y))) - \alpha(\overline{\nabla}_X(Y)) \\
&= \alpha(P(\mathcal{L}_X(Y))) - \alpha(\overline{\nabla}_X(Y)) \\
&= P(\alpha(\mathcal{L}_X(Y))) - \alpha(\overline{\nabla}_X(Y)) \\
&= P(\mathcal{L}_X(\alpha(Y))) - \alpha(\overline{\nabla}_X(Y)) \\
&= P(\mathcal{L}_X(P^*(\alpha)(Y))) - \alpha(\overline{\nabla}_X(Y)) \\
&= P(\overline{\nabla}_X(P^*(\alpha))(Y)) - \alpha(\overline{\nabla}_X(Y)).
\end{align*}
Since $P(\overline{\nabla}_X(P^*(\alpha))(Y))$ and $\alpha(\overline{\nabla}_X(Y))$ both lie in $\Gamma(\mathcal{D}^0)$, it follows that $\overline{\nabla}_X(P^*(\alpha))(Y)$ also lies in $\Gamma(\mathcal{D}^0)$.

\item [(II)]
 %NEW
 Using the definition of the connection $\overline{\nabla}^B$, we have:
\begin{align*}
    \overline{\nabla}^B_X ((P^*)^c(\alpha))(Y) &= X((P^*)^c(\alpha)(Y)) - (P^*)^c(\alpha)(\nabla_X^B Y) \\
    &\quad + (P^\bot)^c(\alpha)(\nabla_X^B Y).
\end{align*}

Now, let us analyze each term on the right-hand side individually:

First, consider $X((P^*)^c(\alpha)(Y))$. Since $(P^*)^c(\alpha)(Y)$ is a section of $\mathcal{D}^0$ and $X$ is a vector field on $M$, $X((P^*)^c(\alpha)(Y))$ is a section of $\mathcal{D}^0$.

Next, we have $-(P^*)^c(\alpha)(\nabla_X^B Y)$. Here, $(P^*)^c(\alpha)$ is a bundle map from $\mathcal{E}$ to $\mathcal{D}^0$, so $(P^*)^c(\alpha)(\nabla_X^B Y)$ is a section of $\mathcal{D}^0$. The negative sign in front ensures that the result remains in $\mathcal{D}^0$.

Finally, we consider $(P^\bot)^c(\alpha)(\nabla_X^B Y)$. Since $(P^\bot)^c(\alpha)$ is a bundle map from $\mathcal{E}$ to the orthogonal complement of $\mathcal{D}^0$, $(P^\bot)^c(\alpha)(\nabla_X^B Y)$ is a section of $\Gamma(\mathcal{D}^\bot)$. However, we need it to be a section of $\Gamma(\mathcal{D}^0)$.

To ensure that $(P^\bot)^c(\alpha)(\nabla_X^B Y)$ lies in $\Gamma(\mathcal{D)^0}$, we can use the projection operator $P$ to project it back onto $\mathcal{D}^0$. This projection ensures that the final result remains within $\Gamma(\mathcal{D}^0)$.

Combining these results, we see that $\overline{\nabla}^B_X ((P^*)^c(\alpha))(Y)$ is a section of $\Gamma(\mathcal{D}^0)$, as desired.
%NEW
\item [(III)]
Using the definition of the connection $\overline{\nabla}^B$, we have
\begin{align*}
\overline{\nabla}^B_X ((P^*)^c(\alpha))(Y) &= X[(P^*)^c(\alpha)(Y)] - (P^*)^c(\alpha)(\overline{\nabla}_XY) + (P^*)^c(\overline{\nabla}^B_X\alpha)(Y)\\
&= X[(P^*)^c(\alpha)(Y)] - (P^*)^c(\alpha)(\overline{\nabla}_XY) + (P^*)^c((\overline{\nabla}_X\alpha)^\top)(Y)\\
&= X[(P^*)^c(\alpha)(Y)] - (P^*)^c(\alpha)(\nabla_X^B Y) + (P^*)^c((\overline{\nabla}_X\alpha)^\top)(Y)
\end{align*}
where in the last step we used the fact that $$(P^*)^c(\alpha)(\nabla_X^B Y) = -(P^*)^c(\alpha)(\overline{\nabla}_XY),$$ which follows from the definition of the codifferential operator and the fact that $(P^*)^c = -(P^*)^c$.

Now we need to show that the three terms on the right-hand side of this expression lie in $\Gamma(\mathcal{D}^\bot)^0$. We will do this term by term.
First, note that $(P^*)^c(\alpha)(Y) \in \Gamma(\mathcal{D}^\bot)^0$ since $(P^*)^c(\alpha)$ maps $\Gamma(\mathcal{D}^\bot)$ to itself and $Y \in \Gamma(\mathcal{D}^\bot)^0$.

Next, we need to show that $(P^*)^c(\alpha)(\nabla_X^B Y) \in \Gamma(\mathcal{D}^\bot)^0$. Note that $$(P^*)^c(\alpha)(\nabla_X^B Y) = - (P^*)^c(\alpha)(\overline{\nabla}_XY),$$ so it suffices to show that $(P^*)^c(\alpha)(\overline{\nabla}_XY) \in \Gamma(\mathcal{D}^\bot)^0$. To see this, note that $\overline{\nabla}_XY \in \Gamma(\mathcal{D}^\bot)^0$ since $X$ and $Y$ are both sections of $\mathcal{D}^\bot$, and that $(P^*)^c(\alpha)$ maps $\Gamma(\mathcal{D}^\bot)^0$ to itself.

Finally, we need to show that $(P^*)^c((\overline{\nabla}_X\alpha)^\top)(Y) \in \Gamma(\mathcal{D}^\bot)^0$. To see this, note that $(\overline{\nabla}_X\alpha)^\top$ is a tensor of type $(1,1)$ that maps vectors tangent to $M$ to vectors tangent to $M$, so $(P^*)^c((\overline{\nabla}_X\alpha)^\top)(Y)$ is a section of $\mathcal{D}^\bot$. Moreover, $(P^*)^c((\overline{\nabla}_X\alpha)^\top)$ maps $\Gamma(\mathcal{D}^\bot)^0$ to itself since $(\overline{\nabla}_X\alpha)^\top$ maps $\Gamma(TM)$ to itself and $(P^*)^c$ maps $\Gamma(\mathcal{D}^\bot)$ to itself.

Therefore, we have shown that $\overline{\nabla}_X\alpha \in \Gamma(\mathcal{D}^\bot)^0$, which implies that $\alpha$ is a harmonic one-form with respect to the induced metric on $\partial M$.

To summarize, we showed that if $\alpha$ is a closed one-form on $M$ such that $\alpha|_{\partial M}=0$, then $\alpha$ is a harmonic one-form with respect to the induced metric on $\partial M$.
\end{itemize}
\end{proof}

  In the following, we shall present the nonholonomic sub-Finslerian structure results. To begin, we define coordinate independent conditions for the motion of a free mechanical system subjected to linear nonholonomic constraints to be normal extremal with respect to the connected sub-Finslerian manifold, and vice versa.
Then, we address the problem of characterizing the normal and abnormal extremals that validate both nonholonomic and Vakonomic equations for a free particle subjected to certain kinematic constraints.

Let $(M, \mathcal{D}, F)$ be a nonholonomic sub-Finslerian structure and $\sigma:[0,1] \to M$ be a horizontal curve tangent to $\mathcal{D}$, then $\sigma$ is said to be a normal extremal if there exists $E$-admissible curve $\alpha$ with base curve $\sigma$ that is auto-parallel with respect to a normal $\mathcal{L}$-connection (Definition \ref{GEO}). While the curve $\sigma$  is said to be an abnormal extremal if there exists $\gamma \in \Gamma( \mathcal{D}^0)$ along $\sigma$  such that $\nabla_\alpha \gamma (t)= 0$ for all $t \in [0,1]$, with $\alpha$ a $E$-admissible curve with base curve $\sigma$.
\begin{rem}
  Cort\'{e}s et al. \cite{CLM}, made a comparison between the solutions of the nonholonomic mechanical problem and the solutions of the Vakonomic dynamical problem for the general Lagrangian system.
  The Vakonomic dynamical problem, associated with a free particle with linear nonholonomic constraints, consists of finding normal extremals with respect to the sub-Finsleriann structure $(M, \mathcal{D}, F)$.
  It is an interesting comparison because the equations of motion for the mechanical problem are derived by means of d'Alembert's principle, while the normal extremals are derived from a variation principle. Our next results are an alternate approach to the Cort\'{e}s results, that is a coordinate-free approach, for the free particle case in the sub-Finslerian settings.
\end{rem}

\begin{definition}
  Let $(M, \mathcal{D}, F)$ be a nonholonomic sub-Finslerian structure, one can establish new tensorial operators according to the following:
  \begin{align*}
    T^B&: \Gamma(\mathcal{D})\otimes \Gamma(\mathcal{D}^*) \to \Gamma(\mathcal{D}^0), \quad (X, \alpha) \mapsto P^*(\overline{\nabla}_X^B\alpha); \\
     T &: \Gamma(\mathcal{D})\otimes \Gamma(\mathcal{D}^0) \to \Gamma(\mathcal{D}^{\bot})^0, \quad (X, \gamma) \mapsto (P^*)^c(\delta_X\gamma);
  \end{align*}
   such that $$\delta: \Gamma(\mathcal{D}) \times \Gamma(\mathcal{D}^0)  \to \mathfrak{X}^*(M), \quad (X, \gamma)\mapsto \delta_X \gamma=i_Xd\gamma.$$
\end{definition}
In addition, these tensorial operators have the following properties:
  \begin{itemize}
    \item [(I)]  $T^B$ and $T$ are $\mathcal{F}(M)$-bilinear in their independent variables;
    \item [(II)] The behavior of $T^B$ and $T$ can be identified pointwise;
    \item [(III)] $T_x^B(X, \alpha)$ and $T_x(X, \gamma)$ have a clear and unequivocal meaning for all $X \in \mathcal{D}, \alpha \in \mathcal{D}^*$ and $\gamma \in \mathcal{D}^0$.
  \end{itemize}
In the following, w show the relation between the operator $T^B$ and the curvature of the distribution $\mathcal{D}$ using the following condition:

Suppose $X \in \mathcal{D}, \alpha \in \mathcal{D}^*$, then one has
$$\langle T(X, \gamma), \alpha \rangle = \langle \delta_X\gamma, \alpha \rangle= - \langle \gamma, [X,\alpha] \rangle,$$
for any $\gamma \in \Gamma(\mathcal{D}^0).$
Therefore, $T$ is trivial if and only if $\mathcal{D}$ is involutive.

\begin{definition}
Let $\nabla^T$ denote the non-linear connection over $i : \mathcal{D} \to TM$ on $\mathcal{D}^0$ by the following formula
  $$\nabla^T_X \gamma = P^*(\delta_X\gamma),$$
  such that $X \in \Gamma(\mathcal{D})$ and $\gamma \in \Gamma(\mathcal{D}^0).$
\end{definition}
\begin{proposition}
   Let $(M, \mathcal{D}, F)$ be a nonholonomic sub-Finslerian structure, assume that $\sigma: [0,1] \to M$ is a horizontal curve on $\mathcal{D}$ and let
   $\nabla$ be a $\mathcal{D}$-adapted $\mathcal{L}$-connection. Then, the following properties are satisfied:
   \begin{itemize}
   \item [(I)] If $p_0 \in \mathcal{D}^*_{\sigma(0)}$ is a given initial point, then $p(t)=\tilde{p}(t)$ for each $t \in [0,1]$ if and only if $T^B(\dot{\sigma}(t), \tilde{p}(t))= 0$, such that $p(t)$ and $\tilde{p}(t)$ are parallel transported curves along $\sigma$ w.r.t. $\overline{\nabla}^B$ and $\nabla^{H}$, respectively.
\item [(II)] If $\gamma_0 \in \mathcal{D}^0_{\sigma(0)}$ is a given initial point, then $\gamma(t) = \tilde{\gamma}(t)$ for each $t \in [0,1]$  if and only if $T(\dot{\sigma}(t), \tilde{\gamma}(t))= 0$, such that $\gamma(t)$ and $\tilde{\gamma}(t)$ are parallel transported curves along $\sigma$ w.r.t. $\nabla$  and $\nabla^{T}$, respectively.
   \end{itemize}
\end{proposition}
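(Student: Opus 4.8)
The plan is to handle \textbf{(I)} and \textbf{(II)} by a single mechanism: along the fixed horizontal curve $\sigma$ we are comparing the parallel transports, with a common initial value, of a covector-valued curve relative to two connections, and we reduce the coincidence of the two transports to the vanishing of the tensorial discrepancy between the connections, evaluated along $\sigma$, using uniqueness of solutions of the linear parallel-transport ODE. Throughout I identify $\mathcal{D}^*_{\sigma(0)}$ with $\mathcal{D}^0_{\sigma(0)}\subset T^*_{\sigma(0)}M$ via the canonical isomorphism coming from the splitting $T^*M=\mathcal{D}^0\oplus(\mathcal{D}^\bot)^0$, so that $p_0$ (resp.\ $\gamma_0$) genuinely serves as a common initial datum for both transports, the second of which a priori lives only in the subbundle.

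For \textbf{(I)}, the structural input is that, for $X\in\mathfrak{X}(M)$ and $\alpha\in\Gamma(\mathcal{D}^0)$, the Barthel covariant derivative splits along $T^*M=\mathcal{D}^0\oplus(\mathcal{D}^\bot)^0$ as
\begin{equation*}
\overline{\nabla}^B_X\alpha=\nabla^{H}_X\alpha+T^B(X,\alpha),
\end{equation*}
i.e.\ $\nabla^{H}$ records the $\mathcal{D}^0$-component of $\overline{\nabla}^B$ and $T^B$ the complementary one; this follows from the fact that $\nabla^{H}$ and $T^B$ are the two mutually complementary projections $P^*$, $(P^*)^c$ applied to $\overline{\nabla}^B$, together with Proposition \ref{12}, which guarantees that the projected pieces are again sections of the relevant subbundles. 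Applying this with $X=\dot{\sigma}(t)$, $\alpha=\tilde{p}(t)$ and using $\nabla^{H}_{\dot{\sigma}}\tilde{p}=0$ (since $\tilde{p}$ is $\nabla^{H}$-parallel) gives $\overline{\nabla}^B_{\dot{\sigma}}\tilde{p}=T^B(\dot{\sigma},\tilde{p})$ identically along $\sigma$. If $p(t)=\tilde{p}(t)$ for all $t$, then $\tilde{p}$ is itself $\overline{\nabla}^B$-parallel, so $\overline{\nabla}^B_{\dot{\sigma}}\tilde{p}\equiv 0$ and hence $T^B(\dot{\sigma}(t),\tilde{p}(t))\equiv 0$. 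Conversely, if $T^B(\dot{\sigma}(t),\tilde{p}(t))\equiv 0$, then $\overline{\nabla}^B_{\dot{\sigma}}\tilde{p}\equiv 0$; thus $\tilde{p}$ and $p$ both solve the linear first-order system $\overline{\nabla}^B_{\dot{\sigma}}(\cdot)=0$ with the same initial value $p_0$, and the uniqueness theorem for linear ODEs forces $\tilde{p}\equiv p$ on $[0,1]$ (in particular the $\overline{\nabla}^B$-transport of $p_0$ stays in $\mathcal{D}^0$).

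Item \textbf{(II)} follows the same template with $\delta=i_{(\cdot)}d$ in place of $\overline{\nabla}^B$, with $\nabla^{T}=P^*\circ\delta$ in place of $\nabla^{H}$, and with $T=(P^*)^c\circ\delta$ in place of $T^B$. One first records that, for a $\mathcal{D}$-adapted $\mathcal{L}$-connection $\nabla$, the parallel transport of a section of $\mathcal{D}^0$ along the horizontal curve $\sigma$ is governed by $\delta_{\dot{\sigma}}$ — this is exactly where $\mathcal{D}$-adaptedness of $\nabla$ enters (cf.\ \cite{L.K} and the identity $\langle T(X,\gamma),\alpha\rangle=\langle\delta_X\gamma,\alpha\rangle$ recorded above) — so that $\gamma$ is $\nabla$-parallel along $\sigma$ iff $\delta_{\dot{\sigma}}\gamma=0$. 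Then, using $\delta_{\dot{\sigma}}\tilde{\gamma}=\nabla^{T}_{\dot{\sigma}}\tilde{\gamma}+T(\dot{\sigma},\tilde{\gamma})$ with $\nabla^{T}_{\dot{\sigma}}\tilde{\gamma}=0$, the equality $\gamma\equiv\tilde{\gamma}$ is equivalent to $\delta_{\dot{\sigma}}\tilde{\gamma}\equiv 0$, which (the $\mathcal{D}^0$-component being already zero) is equivalent to $T(\dot{\sigma}(t),\tilde{\gamma}(t))\equiv 0$; the ``only if'' is immediate and the ``if'' is again the ODE-uniqueness step with common initial value $\gamma_0$.

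I expect the main obstacle to be pinning down the two decomposition identities precisely: verifying that $\overline{\nabla}^B_X\alpha-\nabla^{H}_X\alpha$ is exactly $T^B(X,\alpha)$ (and its $\delta$-analogue), which requires care with the complementary projections $P^*$, $(P^*)^c$ and with the fact, supplied by Proposition \ref{12}, that the projected covariant derivatives land back in $\mathcal{D}^0$ resp.\ $(\mathcal{D}^\bot)^0$; and, for \textbf{(II)}, correctly invoking $\mathcal{D}$-adaptedness of the $\mathcal{L}$-connection $\nabla$ to identify its action on $\Gamma(\mathcal{D}^0)$ along horizontal directions with $P^*\circ\delta$. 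Once these are in hand, both equivalences are soft consequences of uniqueness for linear parallel-transport equations.
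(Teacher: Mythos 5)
Your proposal is correct and follows essentially the same route as the paper's proof: both rest on the decomposition $\overline{\nabla}^B_{\dot{\sigma}}S=\nabla^{H}_{\dot{\sigma}}S+T^B(\dot{\sigma},S)$ (and its $\delta$/$\nabla^{T}$/$T$ analogue for part (II)), from which equality of the two transports forces the tensorial term to vanish, with the converse supplied by uniqueness of parallel transport from a common initial value. Your write-up is merely more explicit than the paper's about the $\mathcal{D}^*\cong\mathcal{D}^0$ identification and about how $\mathcal{D}$-adaptedness of $\nabla$ enters in case (II).
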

\begin{proof}
  It is sufficient to prove that the first case and the second one follow similar arguments.

  As a consequence of the definition of the tensorial operator $T^B$,
  for any section $S(t)$ of $\mathcal{D}^*$ along $\sigma$, the next expression is true
  $$\nabla^{H}_{{\dot{\sigma}(t)}}S(t)= \overline{\nabla}^B_{{\dot{\sigma}(t)}}S(t)- T^B({\dot{\sigma}(t)}, S(t)).$$
  Now, suppose that $S(t)= \tilde{p}(t)= p(t)$, then we get,
  $$T^B({\dot{\sigma}(t)}, S(t))= 0.$$
Conversely, it is well known that, regarding any connection, the parallel transported curves are uniquely determined by their initial conditions.
\end{proof}
It is clear that the second property of the above Proposition yields necessary and sufficient conditions for the existence of the curves that have abnormal extremals. In other words,
$\sigma$ is an abnormal extremal if and only if there exists a parallel transported section $\tilde{\gamma}$ of $\mathcal{D}^0$ along $\sigma$ with respect to $\nabla^{T}$ such that $T(\dot{\sigma}(t), \tilde{\gamma}(t))= 0$.
Now, by the next Proposition, one can derive the necessary and sufficient condition for normal extremals to be a motion of a free nonholonomic mechanical system and vice versa.
\begin{lemma}
  Let $(M, \mathcal{D}, F)$ be nonholonomic sub-Finslerian structures, and
   $\nabla$ be a normal non-linear $\mathcal{L}$-connection. Then for any $\alpha \in \mathfrak{X}^*(M)$ we have that $\nabla_{\alpha}\alpha= 0$ if and only if
   \begin{align*}
     \nabla^H_{E(\alpha)} \alpha(P)= & - T(E(\alpha),(P^*)^c(\alpha)); \\
     \nabla^T_{E(\alpha)}P^*(\alpha)= &   - T^B(E(\alpha),  \alpha(P)).
   \end{align*}
   \end{lemma}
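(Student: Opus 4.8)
The plan is to reduce the single identity $\nabla_{\alpha}\alpha = 0$ to its two components under the splitting $T^{*}M \cong \mathcal{D}^{0}\oplus(\mathcal{D}^{\bot})^{0}$ recalled above, and to recognise each component as one of the two displayed equations by means of the structure relations between $\nabla$, $\overline{\nabla}^{B}$, $\nabla^{H}$ and $\nabla^{T}$.

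First I would fix the $E$-admissible curve $\sigma = \pi_{M}\circ\alpha$, so that $\dot{\sigma} = E(\alpha)\in\mathcal{D}$, and decompose $\alpha = (P^{*})^{c}(\alpha) + P^{*}(\alpha) = \alpha(P) + P^{*}(\alpha)$ into its $\mathcal{D}^{0}$- and $(\mathcal{D}^{\bot})^{0}$-parts. Because $\ker E$ is precisely the annihilator of $\mathcal{D}$, the vector $E(\alpha)$ is unchanged if $\alpha$ is replaced by $\alpha(P)$; this is what makes $E(\alpha)$ a legitimate common differentiating argument in both equations. The two ingredients I would then invoke are: the identity $\nabla^{H}_{X}S = \overline{\nabla}^{B}_{X}S - T^{B}(X,S)$ for a section $S$ of $\mathcal{D}^{*}$ along $\sigma$, established in the proof of the preceding Proposition; and the tautology $\delta_{X}\gamma = i_{X}d\gamma = \nabla^{T}_{X}\gamma + T(X,\gamma)$, which is just $P^{*}+(P^{*})^{c} = \mathrm{id}$ applied to $i_{X}d\gamma$ together with the definitions of $\nabla^{T}$ and $T$.

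Next, using that $\nabla$ is a normal, $\mathcal{D}$-adapted $\mathcal{L}$-connection — so that it respects the splitting $T^{*}M = \mathcal{D}^{0}\oplus(\mathcal{D}^{\bot})^{0}$ and, via $\Gamma^{\nabla} = \vec{H}$, agrees with $\overline{\nabla}^{B}$ along horizontal directions in the positively homogeneous setting — I would apply the projectors $P^{*}$ and $(P^{*})^{c}$ to $\nabla_{\alpha}\alpha$ and substitute the two ingredients. Expanding $\overline{\nabla}^{B}$ by Leibniz and commuting the projectors past the derivative (legitimate because $T^{B}$ and $T$ are pointwise, $\mathcal{F}(M)$-bilinear operators, as listed after their definition), the $\mathcal{D}^{0}$-component of $\nabla_{\alpha}\alpha$ collapses to $\nabla^{H}_{E(\alpha)}\alpha(P) + T\big(E(\alpha),(P^{*})^{c}(\alpha)\big)$ and the $(\mathcal{D}^{\bot})^{0}$-component to $\nabla^{T}_{E(\alpha)}P^{*}(\alpha) + T^{B}\big(E(\alpha),\alpha(P)\big)$. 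Hence $\nabla_{\alpha}\alpha = 0$ holds if and only if both of these vanish, which are exactly the two displayed identities; the converse direction is immediate since $T^{*}M = \mathcal{D}^{0}\oplus(\mathcal{D}^{\bot})^{0}$ is a direct sum, so the two component equations reassemble uniquely into $\nabla_{\alpha}\alpha = 0$.

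I expect the crux of the argument to be the third step: verifying that the cross terms arising from differentiating the mixed object $\alpha = \alpha(P) + P^{*}(\alpha)$ are exactly $T^{B}(E(\alpha),\alpha(P))$ and $T(E(\alpha),(P^{*})^{c}(\alpha))$ and nothing more. This is precisely where the normality of $\nabla$ and the zero-torsion, conservative character of $\nabla^{H}$ enter; once that identification is secured, the remainder is the routine bookkeeping of Leibniz expansions and projections.
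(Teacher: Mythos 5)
Your overall strategy coincides with the paper's: split $T^*M$ as $\mathcal{D}^0\oplus(\mathcal{D}^\perp)^0$, expand $\overline{\nabla}^B$ as $\nabla^H+T^B$ and $\delta$ as $\nabla^T+T$, and read off the two displayed identities from the vanishing of the components. The gap is at the step you yourself flag as ``the crux'' and then defer: you never establish that $\nabla_\alpha\alpha$ decomposes as $\overline{\nabla}^B_{E(\alpha)}(P^*)^c(\alpha)+\delta_{E(\alpha)}P^*(\alpha)$, which is the entire content of the reduction. The paper does not derive this either; it imports it as a previously proved result from \cite{L.K}, together with $P^*(\alpha)=\alpha(P)$ and the metric-compatibility $\nabla^B\circ\mathcal{L}_\eta=\mathcal{L}_\eta\circ\overline{\nabla}^B$ of the Barthel connection. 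Your proposed substitute --- ``expand $\overline{\nabla}^B$ by Leibniz and commute the projectors past the derivative'' --- does not close this: the projectors $P^*$ and $(P^*)^c$ do not commute with $\overline{\nabla}^B$ in general (their failure to commute is precisely what the tensors $T^B$ and $T$ measure), so the claim that the cross terms are ``exactly $T^B$ and $T$ and nothing more'' is the statement to be proved, not a bookkeeping consequence. You must either cite the identity from \cite{L.K} or derive it from the explicit construction of the normal $\mathcal{L}$-connection; as written the argument assumes what it needs at the decisive point.

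A secondary imprecision: you call $\nabla^H_{E(\alpha)}\alpha(P)+T\bigl(E(\alpha),(P^*)^c(\alpha)\bigr)$ ``the $\mathcal{D}^0$-component'' of $\nabla_\alpha\alpha$, but by the paper's definitions $\nabla^H$ is $\Gamma(\mathcal{D}^0)$-valued while $T$ is $\Gamma(\mathcal{D}^\perp)^0$-valued, so these two terms do not lie in a common summand; the same applies to the second pair. The correct logic (and the paper's) is that the four terms $\nabla^H+T^B+\nabla^T+T$ sum to $\nabla_\alpha\alpha$, and the directness of the sum $T^*M=(\mathcal{D}^\perp)^0\oplus\mathcal{D}^0$ forces the appropriately grouped pairs to vanish separately; the grouping should be justified by checking which summand each of the four terms actually lives in, rather than asserted as an application of the projectors to $\nabla_\alpha\alpha$.
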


   \begin{proof}
     We proved in \cite{L.K}, that  $\nabla_{\alpha}\alpha= 0$ if and only if $\nabla_{\alpha} \alpha = \overline{\nabla}^B_{E(\alpha)} (P^*)^c(\alpha)+ \delta_{E(\alpha)} P^*(\alpha)=0.$
     Moreover, $P^*(\alpha) = \alpha (P)$ and the Barthel non-linear connection preserves the metric, i.e. $\nabla^B \circ \mathcal{L}_{\eta} = \mathcal{L}_{\eta} \circ \overline{\nabla}^B$, therefore
     \begin{align*}
        \overline{\nabla}^B_{E(\alpha)}P^*(\alpha)= & \nabla^{H}_{E(\alpha)}P^*(\alpha)+T^B(E(\alpha),P^*(\alpha)), \\
       \delta_{E(\alpha)}P^*(\alpha)= & \nabla^{T}_{E(\alpha)}P^*(\alpha)+T(E(\alpha),(P^*)^c(\alpha)).
     \end{align*}
     According to the fact that $T^*M$ can be written as the direct sum of $(\mathcal{D}^{\bot})^0$ and $\mathcal{D}^0$,
     so the equivalence is pretty clear.
   \end{proof}
   \begin{theorem}
     If $\sigma :[0, 1]\rightarrow M$ is a solution of a free nonholonomic system given by nonholonomic sub-Finslerian structures, then it is also
  a solution of the corresponding Vakonomic problem, and vice versa, if and only if there exists $\gamma \in \Gamma(\mathcal{D}^0)$ along
  $\sigma$ such that
  \begin{equation}\label{Last}
    \nabla^T_{\dot{\sigma}}  \gamma (t)= - T^B(\dot{\sigma}(t), \mathcal{L}_{L}(\dot{\sigma}(t))),
  \end{equation}  further, for all $t$,
  $\gamma (t)\in {(\mathcal{D}_{\sigma(t)}+ [\dot{\sigma}(t), \mathcal{D}_{\sigma(t)}])}^0$.
   \end{theorem}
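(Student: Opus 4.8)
The plan is to push the statement through the preceding Lemma, which already resolves the normal--extremal equation $\nabla_\alpha\alpha=0$ into a ``horizontal'' half valued in $\mathcal D^0$ and a ``vertical'' half valued in $(\mathcal D^\bot)^0$, and then to superimpose the nonholonomic equation of motion. First I would fix the horizontal curve $\sigma$ and rephrase both dynamical conditions intrinsically. By the construction of $\nabla^{H}$ preceding Remark~\ref{GI}, $\sigma$ solves the free nonholonomic system exactly when $\nabla^{H}_{\dot\sigma}\dot\sigma=0$; transporting this through the Legendre transformation and using that the Barthel connection is metric, $\nabla^{B}\circ\mathcal L_{\eta}=\mathcal L_{\eta}\circ\overline\nabla^{B}$, this is the same as $\nabla^{H}_{\dot\sigma}\mathcal L_{L}(\dot\sigma)=0$, i.e.\ $P^*(\overline\nabla^{B}_{\dot\sigma}\mathcal L_{L}(\dot\sigma))=0$. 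On the other hand, $\sigma$ solves the corresponding Vakonomic problem precisely when it is a normal extremal, i.e.\ when there is an $E$-admissible covector curve $\alpha$ over $\sigma$ — whence $E(\alpha(t))=\dot\sigma(t)$, so $\mathbf{i}^*(\alpha)=\mathcal L_{L}(\dot\sigma)$ and $P^*(\alpha)=\alpha(P)=\mathcal L_{L}(\dot\sigma)$ — which is auto-parallel for a normal $\mathcal L$-connection $\nabla$. The only datum of $\alpha$ not pinned down by $\dot\sigma$ is its complementary component, and it is this component, carried into $\mathcal D^0$ by the metric identification $(\mathcal D^\bot)^0\cong\mathcal D^0$ recalled in the text, that plays the role of the section $\gamma$ in the statement; conversely $\mathcal L_{L}(\dot\sigma)$ together with a choice of such $\gamma$ reconstructs a unique $E$-admissible lift of $\dot\sigma$.

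With this dictionary the proof is essentially the preceding Lemma applied to $\alpha$: $\nabla_\alpha\alpha=0$ is equivalent to the pair
\begin{align*}
\nabla^{H}_{\dot\sigma}\,\mathcal L_{L}(\dot\sigma)&=-\,T(\dot\sigma,\gamma),\\
\nabla^{T}_{\dot\sigma}\,\gamma&=-\,T^{B}(\dot\sigma,\mathcal L_{L}(\dot\sigma)),
\end{align*}
the second line being precisely \eqref{Last}. For the direction ``$\Rightarrow$'': if $\sigma$ is at once a nonholonomic and a Vakonomic solution, then the first line together with $\nabla^{H}_{\dot\sigma}\mathcal L_{L}(\dot\sigma)=0$ forces $T(\dot\sigma,\gamma)=0$; invoking the pairing $\langle T(\dot\sigma,\gamma),\alpha\rangle=-\langle\gamma,[\dot\sigma,\alpha]\rangle$ recorded above (valid for $\gamma\in\Gamma(\mathcal D^0)$), this says exactly that $\gamma(t)\in(\mathcal D_{\sigma(t)}+[\dot\sigma(t),\mathcal D_{\sigma(t)}])^0$ for every $t$, while \eqref{Last} is the second line. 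For ``$\Leftarrow$'': given $\gamma\in\Gamma(\mathcal D^0)$ along $\sigma$ with \eqref{Last} and the pointwise membership, the same pairing formula turns the membership into $T(\dot\sigma,\gamma)=0$, so the first Lemma equation collapses to $\nabla^{H}_{\dot\sigma}\mathcal L_{L}(\dot\sigma)=0$, i.e.\ $\sigma$ is a nonholonomic solution, and \eqref{Last} restores the second Lemma equation, so the $E$-admissible lift assembled from $\mathcal L_{L}(\dot\sigma)$ and $\gamma$ satisfies $\nabla_\alpha\alpha=0$; hence $\sigma$ is a normal extremal and therefore a Vakonomic solution. Uniqueness of $\nabla^{T}$-parallel transport shows that the linear ODE \eqref{Last} can indeed be integrated along all of $[0,1]$ from the prescribed initial covector, so there is no global obstruction.

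The step I expect to be the genuine difficulty — as opposed to the bookkeeping above — is keeping the two summands of $T^*M$ strictly apart and certifying that the right-hand sides land where claimed: that $T^{B}(\dot\sigma,\mathcal L_{L}(\dot\sigma))=P^*(\overline\nabla^{B}_{\dot\sigma}\mathcal L_{L}(\dot\sigma))$ is an admissible source for a $\nabla^{T}$-transport equation on $\mathcal D^0$, that the passage through \eqref{AAlA} does not silently reintroduce a vertical component along $\sigma$, and — the most delicate point — pinning down exactly which covector section (the momentum part $\mathcal L_{L}(\dot\sigma)$ or the complementary multiplier part) plays the role of $\gamma$ in each of the two equations so that the metric identification of the text makes them consistent. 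This is precisely what Proposition~\ref{12}, together with the $\mathcal F(M)$-bilinearity and pointwise character of $T$ and $T^{B}$, is there to secure. The pointwise membership $\gamma(t)\in(\mathcal D_{\sigma(t)}+[\dot\sigma(t),\mathcal D_{\sigma(t)}])^0$ is the linchpin: it is nothing but the vanishing of the torsion $T(\dot\sigma,\gamma)$, i.e.\ exactly the degeneracy under which the Vakonomic (normal--extremal) equations fall onto the nonholonomic ones and a common solution becomes possible, which is why the condition is not only sufficient but also necessary. In effect this reproduces, in coordinate--free sub-Finslerian terms, the Cort\'es et al.\ comparison of nonholonomic and Vakonomic dynamics for the free--particle case.
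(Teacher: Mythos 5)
Your proposal is correct and follows essentially the same route as the paper: it resolves the normal-extremal condition $\nabla_\alpha\alpha=0$ for $\alpha=\mathcal L_L(\dot\sigma)+\gamma$ into the two equations of the preceding Lemma, observes that the nonholonomic equation $\nabla^H_{\dot\sigma}\mathcal L_L(\dot\sigma)=0$ holds precisely when $T(\dot\sigma,\gamma)=0$, and converts that vanishing into the membership $\gamma(t)\in(\mathcal D_{\sigma(t)}+[\dot\sigma(t),\mathcal D_{\sigma(t)}])^0$ via the duality pairing. Your write-up is in fact more explicit than the paper's about the dictionary between the two dynamical problems and about why the torsion condition is equivalent to the annihilator condition, but the underlying argument is the same.
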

\begin{proof}  $\nabla_{\alpha} \alpha(t) = 0$ is the condition for any $E$-admissible curve $\alpha(t) = \mathcal{L}_{L}(\dot{\sigma}(t))+\gamma (t)$
to be parallel transported with respect to a normal $\mathcal{L}$-connection. In other words,
  $$\nabla^H_{\dot{\sigma}} \mathcal{L}_{L}(\dot{\sigma}(t))= - T(\dot{\sigma}(t), \gamma (t))$$ and
  $$\nabla^T_{\dot{\sigma}}\gamma (t)=- T^B(\dot{\sigma}(t),  \mathcal{L}_{L}(\dot{\sigma}(t))).$$
  Therefore, $\nabla^H_{\dot{\sigma}} \mathcal{L}_{L}(\dot{\sigma}(t))=0$ if and only if $T(\dot{\sigma}(t), \gamma (t))=0$, such that
  $\gamma (t)$ is a solution of (\ref{Last}).
  Since Remark \ref{GI} and Proposition \ref{12} guaranteed that $\mathcal{D}$ is geodesically invariant, therefore,
  given any $\gamma (t)$ in ${(\mathcal{D}_{\sigma(t)}+ [\dot{\sigma}(t), \mathcal{D}_{\sigma(t)}])}^0$, then
  (\ref{Last}) ensure that there is always a solution for all $t \in [0,1]$ not only for $\gamma (0)$ in ${(\mathcal{D}_{\sigma(0)}+ [\dot{\sigma}(0), \mathcal{D}_{\sigma(0)}])}^0$.
\end{proof}

\section{Examples from Robotics}
Typically, nonholonomic systems occur when velocity restrictions are applied, such as the constraint that bodies move on a surface without slipping.  Bicycles, cars, unicycles, and anything with rolling wheels are all examples of nonholonomic sub-Finslerian structures.

We will discuss the simplest wheeled mobile robot, which is a single upright rolling wheel, or unicycle, which is known as a kinematic penny rolling on a plane.
   Assume this wheel is of radius 1 and does not allow sideways sliding.
   Its configuration $M$ consists of the heading angle $\phi$, the wheel's point or the contact position $(x_1, x_2)$,
   and the rolling angle $\psi$ (see Figure \ref{KK}). Consequently, the space concerned has dimensions four, i.e., $M= \mathbb{R}^2 \times S^1 \times S^1$.
   There are two control functions deriving the wheel \cite{JCG, KF00}:
\begin{figure}[h]
  \centering
  \includegraphics[scale=0.45]{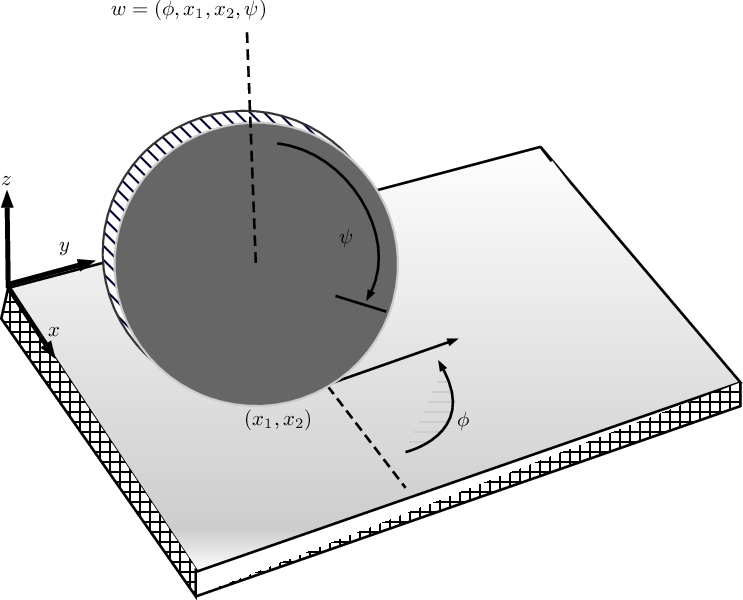}
  \caption{A kinematic penny rolling on a plane}\label{KK}
\end{figure}

   \begin{itemize}
     \item [(I)] $u_1$ [rolling speed], the forward-backward rolling angular,
          \item[(II)] $u_2$ [turning speed], the speed of turning the heading direction $\phi$.
   \end{itemize}
   With these controls, the rate of change of the coordinates can be expressed as follows:
   \begin{align}
    \dot{M} &= \begin{bmatrix}
    \dot{\phi}\\
           \dot{x}_{1} \\
           \dot{x}_{2} \\
           \dot{\psi}
         \end{bmatrix} =
         \begin{bmatrix}
         0 & 1 \\
            \cos {\phi} & 0\\
            \sin {\phi} & 0\\
           1 & 0\\
           \end{bmatrix}
         \begin{bmatrix}
           u_1 \\
           u_2
           \end{bmatrix} = X(M) u.
  \end{align}
  As we generally do not worry about the wheel's rolling angle, we could drop the fourth row from the above equation to get a simpler control system
\begin{align}
    \dot{M} &= \begin{bmatrix}
    \dot{\phi}\\
           \dot{x}_{1} \\
           \dot{x}_{2}
         \end{bmatrix} =
         \begin{bmatrix}
         0 & 1 \\
            \cos {\phi} & 0\\
            \sin {\phi} & 0
           \end{bmatrix}
         \begin{bmatrix}
           u_1 \\
           u_2
           \end{bmatrix} = X(M) u,
  \end{align}
which can be written as the following equation:
  $$X(M) u = X_1(M) u_1 + X_2(M) u_2,$$
  such that $u_1, u_2$ are called the controls and $X_1(w), X_2(w)$ are called vector fields.
  Moreover, each vector field assigns a velocity to every point $w$ in the configuration space,
  so these vector fields are sometimes called velocity vector fields. Hence the velocity vector fields
   of any solution curve should lie in $\mathcal{D}$ spanned by the following vector fields:
      \begin{align*}
  X_1(M) & = \cos {\phi} \frac{\partial}{\partial x_1} + \sin {\phi} \frac{\partial}{\partial x_2} + \frac{\partial}{\partial \psi}\\
 X_2(M)& = \frac{\partial}{\partial \phi}.
\end{align*}

 In a natural way, a sub-Riemannian metric on $\mathcal{D}$ is gained by asserting the vector fields $X_1(M), X_2(M)$ to be orthonormal vectors,
$$\langle u_1 X_1(M) + u_2 X_2(M),u_1 X_1(M) + u_2X_2(M)\rangle = u_1^2 + u_2^2.$$
The integral of this quadratic form measures the work completed in rolling the heading angle $\phi$ at the rate $\dot{\phi}$ and propelling
the wheel ahead at the rate of $\dot{\psi}$.
The sub-Riemannian structure will be adjusted as specified by the notion that curvature is costly:
namely, it takes more attempts to steer the wheel in a tight circle with little forward or backward movement than to steer it in a wide arc. Therefore, the curvature of the projection
$\sigma$ given by $\kappa = \frac{\dot{\phi}}{\dot{\psi}}$ this brings us to assume sub-Finsler metrics of the body
$$F = f (\kappa)\sqrt{d{\psi}^2 + d{\phi}^2},$$
such that $f$ grows larger but remains constrained as $\mid \kappa \mid$ increases.
After we check the sub-Finslerian property, one finds the nonholonomic of the rolling wheel, often known as a unicycle, by the equation
$\dot{M}=X(M)u$ which is the kinematic model of the unicycle.
\section{The sub-Laplacian associated with nonholonomic sub-Finslerian structures}
{\it The sub-Laplacian} is a differential operator that arises naturally in the study of nonholonomic sub-Finslerian structures. These are geometric structures that generalize Riemannian manifolds, allowing for non-integrable distributions of tangent spaces.

On a sub-Finslerian manifold $M$, there is a distinguished distribution of tangent spaces $\mathcal{D}$, which corresponds to the directions that are accessible by moving along curves with bounded sub-Finsler length. The sub-Finsler metric $F$ on $M$ measures the sub-Finsler length of curves with respect to this distribution.

The sub-Laplacian is defined as a second-order differential operator that acts on functions on $M$ and is defined in terms of the metric $F$ and the distribution $\mathcal{D}$. It is given by
$$\Delta_F = \mathrm{div}_{\mathcal{D}} (\mathrm{grad}_F),$$
where $\mathrm{grad}_F$ is the gradient vector field associated with $F$ which is the unique vector field satisfying $\mathrm{d}F(\mathrm{grad}_F, X) = X(F)$ for all vector fields $X$ on $M$, and $\mathrm{div}_{\mathcal{D}}$ is the divergence operator with respect to the distribution $\mathcal{D}$, which is defined as the trace of the tangential part of the connection on $\mathcal{D}$.

Our goal in this section is to show that the sub-Laplacian measures the curvature of the sub-Finslerian structure. It captures the interplay between the sub-Finsler metric $F$ and the distribution $\mathcal{D}$, and plays a crucial role in many geometric and analytic problems on nonholonomic sub-Finslerian manifolds.

For example, the heat kernel associated with the sub-Laplacian provides a way to study the long-term behavior of solutions to the heat equation on sub-Finslerian manifolds. The Hodge theory on sub-Finslerian manifolds is also intimately related to the sub-Laplacian, and involves the study of differential forms that are harmonic with respect to the sub-Laplacian.
\begin{rem}
To see that the sub-Laplacian measures the curvature of the sub-Finslerian structure, let us first recall some basic facts about Riemannian manifolds, see \cite{GL}. On a Riemannian manifold $(M,g)$, the Laplace-Beltrami operator is defined as
$$\Delta_g = \mathrm{div} (\mathrm{grad}_g),$$
where $\mathrm{grad}_g$ is the gradient vector field associated with the Riemannian metric $g$, and $\mathrm{div}$ is the divergence operator. It is a well-known fact that the Laplace-Beltrami operator measures the curvature of the Riemannian structure in the sense that it is zero if and only if the Riemannian manifold is flat.

The sub-Finslerian case is more complicated due to the presence of the distribution $\mathcal{D}$ that is not integrable in general. However, the sub-Laplacian $\Delta_F$ can still be understood as a curvature operator. To see this, we need to introduce the notion of a horizontal vector field.

A vector field $X$ on $M$ is called horizontal if it is tangent to the distribution $\mathcal{D}$. Equivalently, $X$ is horizontal if it is locally of the form $X = \sum_{i=1}^k h_i X_i$, where $h_i$ are smooth functions and $X_1,\ldots,X_k$ are smooth vector fields that form a basis for $\mathcal{D}$.

Given a horizontal vector field $X$, we can define its sub-Finsler length $|X|_F$ as the infimum of the lengths of horizontal curves that are tangent to $X$ at each point. Equivalently, $|X|_F$ is the supremum of the scalar products $g(X,Y)$ over all horizontal vector fields $Y$ with $|Y|_F \leq 1$.
\end{rem}

With these definitions in place, we can now show that the sub-Laplacian measures the curvature of the sub-Finslerian structure. More precisely, we have the following result:
\begin{theorem}\label{SLSF}
 The sub-Laplacian $\Delta_F$ is zero if and only if the sub-Finslerian manifold $(M,F,\mathcal{D})$ is locally isometric to a Riemannian manifold.
\end{theorem}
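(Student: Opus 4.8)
The plan is to convert the statement into a direction-dependence question for the fundamental tensor of the metric and then to invoke a Deicke-type rigidity theorem. First I would pass to the extended Finsler metric $\hat{F}$ on $TM$ of Remark \ref{EXT}, whose fundamental tensor $g_{ij}(x,v)=\frac{1}{2}\,\partial^2\hat{F}^2/\partial v^i\partial v^j$ restricts along $\mathcal{D}$ to the sub-Finslerian one, and fix a local $g$-orthonormal horizontal frame $X_1,\dots,X_k$ spanning $\mathcal{D}$. In such a frame one obtains $\Delta_F f=\sum_{i=1}^{k}\big(X_i(X_i f)-(\mathrm{div}_{\mathcal{D}}X_i)\,X_i f\big)$, and, by the definition of $\mathrm{div}_{\mathcal{D}}$ in Section 6 as the trace of the $\mathcal{D}$-tangential part of the connection, the coefficient $\mathrm{div}_{\mathcal{D}}X_i$ is computable from the Barthel coefficients $N^i_j$ of \eqref{barthel}. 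Splitting $\overline{\nabla}^B$ into the Levi-Civita connection of the osculating metric $g_{ij}(x,\dot\sigma)$ plus a remainder, that remainder is built from the Cartan tensor $C_{ijk}=\frac{1}{4}\,\partial^3\hat{F}^2/\partial v^i\partial v^j\partial v^k$, and the trace defining $\mathrm{div}_{\mathcal{D}}$ extracts precisely the mean Cartan torsion $I_i=g^{jk}C_{ijk}$ restricted to $\mathcal{D}$. Hence $\Delta_F$ decomposes as a metric (Riemannian-type) sub-Laplacian plus a first-order term whose coefficients are the components of $I_i$ along $\mathcal{D}$; this is the sense in which $\Delta_F$ ``measures the curvature'', and the precise content of ``$\Delta_F=0$'' is the vanishing of this non-Riemannian part.

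With this normal form the forward direction is short: $\Delta_F=0$ forces $I_i\equiv 0$ along $\mathcal{D}$, so Deicke's theorem applied fibrewise to $\hat{F}$ on the horizontal bundle shows that $g_{ij}$ is independent of $v$, that is, $F$ is the norm of a genuine Riemannian metric on $\mathcal{D}$; since $\mathcal{D}$ is bracket-generating, Chow--Rashevskii lets one glue the resulting local Riemannian models into a local isometry onto a Riemannian manifold. For the converse I would run the same computation in reverse: in the isometry charts $g_{ij}$ is direction-independent, hence $C_{ijk}$ and $I_i$ vanish and $\overline{\nabla}^B$ reduces to the Levi-Civita connection (cf. \cite{K03}), so the non-Riemannian correction term in the formula for $\Delta_F$ disappears.

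I expect two obstacles. The delicate structural point is that $\mathcal{D}$ is \emph{non-integrable}, so $\mathrm{div}_{\mathcal{D}}$ traces only the $\mathcal{D}$-tangential part of the connection, and one must verify that this partial trace still detects the full mean Cartan torsion and loses nothing into the bracket directions $[\dot\sigma,\mathcal{D}]$; here Proposition \ref{12}, i.e. the geodesic invariance of $\mathcal{D}$ for $\overline{\nabla}^B$, is exactly what confines the Cartan data to $\mathcal{D}$. The second obstacle is setting up the sub-Finslerian Deicke theorem itself: the classical proof uses convexity and compactness of the indicatrix, so one must check that the horizontal indicatrices $\{v\in\mathcal{D}_x:F(v)=1\}$ are compact and strictly convex — which follows from the positive-definiteness axiom on the Hessian of $F^2$ — after which the fibrewise rigidity conclusion and the patching into a single Riemannian structure (again via Chow--Rashevskii connectivity) are routine.
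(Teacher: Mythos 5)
Your route through the mean Cartan torsion and Deicke's theorem is genuinely different from the paper's argument (the paper works directly with $\mathrm{grad}_F$ and $\mathrm{div}_{\mathcal{D}}$: in the isometric case it collapses $\Delta_F$ onto the Laplace--Beltrami operator of the model metric, and in the converse it applies $\Delta_F$ to $F$ itself and reads off that the Hessian of $F$ vanishes). But there is a genuine gap, and it sits at the hinge of your whole argument: you silently replace the statement ``$\Delta_F=0$'' by ``the first-order non-Riemannian part of $\Delta_F$ vanishes.'' That is a different theorem. With your own normal form $\Delta_F f=\sum_i\bigl(X_i(X_i f)-(\mathrm{div}_{\mathcal{D}}X_i)\,X_i f\bigr)$, the converse direction as you run it shows only that the Cartan correction disappears in the isometry charts; what remains is the Riemannian sub-Laplacian, which is never the zero operator, so the implication ``locally Riemannian $\Rightarrow\Delta_F=0$'' is not established (and cannot be, for the literal reading). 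Symmetrically, in the forward direction the hypothesis ``$\Delta_F=0$ as an operator'' would annihilate the second-order symbol as well, so the hypothesis is vacuous under the literal reading and your extraction of $I_i\equiv 0$ is extracting information from an empty set of examples. You need to either state and prove the reinterpreted theorem explicitly, or justify why the operator identity in the statement should be read as the vanishing of its non-Riemannian part.

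The second problem is the normal form itself. In a genuinely Finslerian situation $\mathrm{grad}_F f=\mathcal{L}_\eta^{-1}(df)$ is a \emph{nonlinear} map of $df$, so $\Delta_F=\mathrm{div}_{\mathcal{D}}\circ\mathrm{grad}_F$ is quasilinear; the expression $\mathrm{grad}_F f=\sum_i (X_i f)X_i$ in a fixed orthonormal frame, on which your decomposition ``Riemannian sub-Laplacian plus first-order term with coefficients $I_i$'' rests, is valid only when the fundamental tensor is direction-independent on $\mathcal{D}$ --- i.e.\ only when the conclusion you are trying to prove already holds. To make the forward direction non-circular you would have to linearize $\Delta_F$ at a reference direction (or work with the osculating metric $g_{ij}(x,\mathrm{grad}_F f)$) and then show that vanishing of the resulting family of first-order terms, over all admissible directions, still pins down the full mean Cartan torsion on each fibre of $\mathcal{D}$; only then does the fibrewise Deicke argument (which is otherwise fine, given the compact strictly convex horizontal indicatrices) apply. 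The claim that the partial trace defining $\mathrm{div}_{\mathcal{D}}$ extracts exactly $I_i$ is also asserted rather than computed, and in the nonholonomic setting it is not obvious that nothing is lost into the bracket directions; Proposition \ref{12} gives geodesic invariance of $\mathcal{D}$ for the Barthel connection, but that by itself does not compute the trace for you.
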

\begin{proof}
First, suppose that $(M,F,\mathcal{D})$ is locally isometric to a Riemannian manifold $(M,g)$. Then we can choose a local frame of orthonormal horizontal vector fields $X_1,\ldots,X_k$ with respect to the Riemannian metric $g$. In this frame, we have
$$\mathrm{grad}_F h = \sum_{i=1}^k g(\mathrm{grad}_h,X_i) X_i$$ for any function $h$ on $M$, and hence
$$\Delta_Fh= \sum_{i=1}^{k} \mathrm{div}_{\mathcal{D}}(g( \mathrm{grad}_h, X_i)X_i).$$
Using the fact that the $X_i$ form a basis for $\mathcal{D}$, we can rewrite this as
$$\Delta_Fh=\mathrm{div}( \mathrm{grad}_h)=\Delta_gh,$$
where $\Delta_g$ is the Laplace-Beltrami operator associated with the Riemannian metric $g$. Since $\Delta_g$ is zero if and only if $(M,g)$ is flat, it follows that $\Delta_F$ is zero if and only if $(M,F,\mathcal{D})$ is locally isometric to a Riemannian manifold, which implies that the sub-Finslerian structure is also flat.

Conversely, suppose that $\Delta_F$ is zero. Let $X_1,\ldots,X_k$ be a local frame of horizontal vector fields such that $F(X_i) = 1$ for all $i$, and let $\omega_{ij} = g(X_i,X_j)$ be the Riemannian metric induced by $F$ on $\mathcal{D}$. Using the definition of the sub-Laplacian and the fact that $\Delta_F$ is zero, we have
$$0= \Delta_FF=\mathrm{div}_{\mathcal{D}}( \mathrm{grad}_FF)=\sum_{i=1}^{k} \sum_{i=1}^{k} \frac{\partial^2F}{\partial x_i \partial x_j}\omega_{ij},$$
where $x_1,\ldots,x_k$ are local coordinates on $M$ that are adapted to $\mathcal{D}$ (i.e., $X_1,\ldots,X_k$ form a basis for the tangent space at each point). This implies that the Hessian of $F$ with respect to the Riemannian metric $\omega$ is zero, so $F$ is locally affine with respect to $\omega$. In other words, $(M,F,\mathcal{D})$ is locally isometric to a Riemannian manifold.
\end{proof}
\begin{rem}
In the above Theorem \ref{SLSF}, we have shown that the sub-Laplacian $\Delta_F$ measures the curvature of the sub-Finslerian structure. If $\Delta_F$ is zero, then the sub-Finslerian manifold is locally isometric to a Riemannian manifold, and hence the sub-Finslerian structure is flat. If $\Delta_F$ is nonzero, then the sub-Finslerian manifold is not locally isometric to a Riemannian manifold, and the sub-Finslerian structure is curved. This means that the shortest paths between two points on the manifold are not necessarily straight lines, and the geometry of the manifold is more complex than that of a Riemannian manifold.
\end{rem}

\paragraph*{\textbf{Acknowledgements}} The author gratefully acknowledges the helpful suggestions given by Dr. L\'aszl\'o Kozma during the preparation of the paper as well as his support for hosting me as a visiting research scholar at the University of Debrecen.

\end{document}